\newtheorem{theorem}{Theorem}[section]
\newtheorem{definition}[theorem]{Definition}
\newtheorem{corollary}[theorem]{Corollary}
\DeclareSymbolFont{AMSb}{U}{msb}{m}{n}
\DeclareMathSymbol{\N}{\mathbin}{AMSb}{"4E}
\DeclareMathSymbol{\Z}{\mathbin}{AMSb}{"5A}
\DeclareMathSymbol{\R}{\mathbin}{AMSb}{"52}
\DeclareMathSymbol{\Q}{\mathbin}{AMSb}{"51}
\DeclareMathSymbol{\I}{\mathbin}{AMSb}{"49}
\DeclareMathSymbol{\C}{\mathbin}{AMSb}{"43}
\DeclareMathOperator{\sgn}{sgn}
\DeclareMathOperator{\im}{im}
\DeclareMathOperator{\link}{link}
\DeclareMathOperator{\des}{des}
\begin{document}

\title{The Coloring Complex and Cyclic Coloring Complex of a Complete $k$-Uniform Hypergraph}
\author{Sarah Crown Rundell\\Denison University} \maketitle

\begin{abstract} In this paper, we study the homology of the coloring complex and the cyclic coloring complex of a complete $k$-uniform hypergraph.  We show that the coloring complex of a complete $k$-uniform hypergraph is shellable, and we determine the rank of its unique nontrivial homology group in terms of its chromatic polynomial.  We also show that the dimension of the $(n-k-1)^{st}$ homology group of the cyclic coloring complex of a complete $k$-uniform hypergraph is given by a binomial coefficient.   Further, we discuss a complex whose $r$-faces consist of all ordered set partitions $[B_1, \hdots , B_{r+2}]$ where none of the $B_i$ contain a hyperedge of the complete $k$-uniform hypergraph $H$ and where $1 \in B_1$.  It is shown that the dimensions of the homology groups of this complex are given by binomial coefficients.  As a consequence, this result gives the dimensions of the multilinear parts of the cyclic homology groups of $\C[x_1, \hdots ,x_n]/ \{x_{i_1} \hdots x_{i_k} \mid i_{1} \hdots i_{k}$ is a hyperedge of $H \}$.
\end{abstract}

\section{Introduction}

\hspace{.5in}In this paper, we will study the homology of the coloring complex and the cyclic coloring complex of a complete $k$-uniform hypergraph.  Throughout the paper, let $G$ be a simple graph on $n$ vertices.  

\hspace{0.5in}Consider $R = A/I$ where $A = F[x_S \mid S \subseteq [n] ]$, $I$ is the ideal generated by $\{x_U x_T \mid U \not \subseteq T, T \not \subseteq U \}$, and $F$ is a field of characteristic zero.  The ideal $K_G$ is defined to be the ideal generated by the monomials $x_{X_1}^{e_1} x_{X_2}^{e_2} \hdots x_{X_l}^{e_l}, e_i > 0$ such that for all $i$, $1 \leq i \leq l+1, Y_i = X_i \backslash X_{i-1}$ does not contain an edge of $G$ ($X_0 = \emptyset$ and $X_{l+1} = [n]$).  Steingr\'{\i}msson~\cite{sm} shows that there is a bijection between the monomials of $K_G$ of degree $r$ and colorings of $G$ with $r+1$ colors.  He thus names $K_G$ the coloring ideal and notes that the quotient $R/K_G$ is the face ring of a simplicial complex, $\Lambda(G)$, or the coloring complex of $G$. 

\hspace{0.5in}Jonsson~\cite{jo} extended the work of Steingr\'{\i}msson and proved that for the case where $G$ has at least one edge, $\Lambda(G)$ is a constructible complex.  He shows that the $(n-3)^{rd}$ homology group of $\Lambda(G)$ is the only nonzero homology group and that the dimension of this group is equal to $\chi_G(-1)-1$, where $\chi_{G}(\lambda)$ is the chromatic polynomial of $G$.  

\hspace{0.5in}Crown~\cite{cr} defined and studied the cyclic coloring complex of a graph, denoted $\Delta(G)$.   In her paper, she determines the dimensions of the homology groups of $\Delta(G)$.  She shows that for a connected graph, $G$, the dimension of the $(n-3)^{rd}$ homology group of $\Delta(G)$ is equal to $n-2$ plus $\chi_{G}'(0)$, and the dimension of the $r^{th}$ homology group, for $r < n-3$, is given by the binomial coefficient $\binom{n-1}{r+1}$.  If $G$ has $k$ connected components, she shows that the dimension of the $(n-3)^{rd}$ homology group of $\Delta(G)$ is equal to $n-(k+1)$ plus $\frac{1}{k!}\mid \chi_{G}^{k}(0) \mid$, where $\chi_{G}^{k}(\lambda)$ is the $k^{th}$ derivative of $\chi_{G}(\lambda)$.  For $r < n-3$, she gives a formula for the dimension of the $r^{th}$ homology group of $\Delta(G)$ in terms of a sum of binomial coefficients and the value $\frac{1}{k!} \mid \chi_{G}^{k}(0) \mid$.

\hspace{.5in}Let $H$ be a hypergraph on $n$ vertices.  The coloring complex of a hypergraph, $\Lambda(H)$ was introduced in Long and Rundell~\cite{lr}, as well as in Breuer, Dall, and Kubitzke~\cite{bd}.  In Long and Rundell~\cite{lr}, the authors extend a result of Hanlon~\cite{ha} in which he shows that there exists a Hodge decomposition of the unique nontrivial homology group of $\Lambda(G)$ and that the dimension of the $j^{th}$ Hodge piece of this decomposition equals the absolute value of the coefficient of $\lambda^{j}$ in $\chi_{G}(\lambda)$.  Long and Rundell~\cite{lr} extend this result by showing that the Euler Characteristic of the $j^{th}$ Hodge subcomplex of $\Lambda(H)$ is related to the coefficient of $\lambda^{j}$ in $\chi_{H}(\lambda)$.  They also show that for a class of hypergraphs, which they call star hypergraphs, the coloring complex of the hypergraph is Cohen-Macaulay.  In the Breuer, Dall, and Kubitzke~\cite{bd} paper, the authors show that the $f$- and $h$- vectors of the coloring complexes of hypergraphs provide tighter bounds on the coefficients of chromatic polynomials of hypergraphs.  They also show that the coloring complex of a hypergraph has a wedge decomposition, and they provide a characterization of hypergraphs having a connected coloring complex.

\hspace{.5in}In this paper, we will study the homology of the coloring complex and the cyclic coloring complex of a complete $k$-uniform hypergraph, and we list the main results of the paper below.  First we will show:\\

\textbf{Theorem ~\ref{ShellableThm1}} \emph{Let $H$ be the complete $k$-uniform hypergraph on $n$ vertices and let $k > n/2$.  Then $\Lambda(H)$ is shellable of dimension $n-k-1$.}\\

From this theorem, we obtain a basis for the cohomology of $\tilde{H}^{n-k-1}(\Lambda(H), \Z)$ (Corollary ~\ref{Cohomology_Basis}), and we also obtain the following result:\\

\textbf{Corollary ~\ref{Homology_Complete_Hypergraph}} \emph{If $H$ is the complete, $k$-uniform hypergraph on $n$ vertices, then the homology of $\Lambda(H)$ is nonzero only in dimension $n-k-1$, and the dimension of $H_{n-k-1}(\Lambda(H))$ equals the sum of the absolute values of the coefficients of $\chi_{H}(\lambda)$ minus one.  Moreover, the dimension of the $j^{th}$ Hodge piece in the Hodge decomposition of $\Lambda(H)$ is 
$$\dim(H_{n-k-1}^{(j)}(\Lambda(H))) = (-1)^{n-k}[\lambda^{j}](\chi_{H}(-\lambda) - (-\lambda)^n).$$}

We also note that:\\

\textbf{Theorem ~\ref{ShellableThm2}} \emph{Let $H$ be a $k$-uniform hypergraph on $n$ vertices, and let $v$ be a vertex of $H$.  Suppose that the edge set of $H$ consists of all possible hyperedges of size $k$ containing the vertex $v$.  Then $\Lambda(H)$ is shellable.}\\

\textbf{Corollary ~\ref{Homology_Complete_Hypergraph2}} \emph{Let $H$ be a $k$-uniform hypergraph on $n$ vertices, and let $v$ be a vertex of $H$.  Suppose that $H$ consists of all possible hyperedges of size $k$ containing the vertex $v$.  Then the homology of $\Lambda(H)$ is nonzero only in dimension $n-k-1$, and the dimension of $H_{n-k-1}(\Lambda(H))$ equals the sum of the absolute values of the coefficients of $\chi_{H}(\lambda)$ minus one.  Moreover, the dimension of the $j^{th}$ Hodge piece in the Hodge decomposition of $\Lambda(H)$ is 
$$\dim(H_{n-k-1}^{(j)}(\Lambda(H))) = (-1)^{n-k}[\lambda^{j}](\chi_{H}(-\lambda) - (-\lambda)^n).$$}

As a corollary to Theorem ~\ref{ShellableThm2}, we are able to obtain a basis for $\tilde{H}^{n-k-1}(\Lambda(H), \Z)$, and this result is stated in Corollary ~\ref{Cohomology_Basis2}.

\hspace{0.5in}In Section 4, we begin our study of the cyclic coloring complex of the complete $k$-uniform hypergraph with the result:\\

\textbf{Theorem ~\ref{Vertex1Hypergraph}}  \emph{Let $H$ be the $k$-uniform hypergraph on $n$ vertices with edge set consisting of all possible hyperedges of size $k$ containing the vertex 1.  Then the dimension of $HC_r(\Delta(H))$ is nonzero for $n-k-1 \geq r \geq -1$ and is given by
$$\dim(HC_{r}(\Delta(H))) = \binom{n-1}{r+1}.$$}

\hspace{0.5in}We can define an action of $S_{r+2}$ on $\Delta_{r}$.  Namely, if $\sigma \in S_{r+2}$, then $\sigma \cdot (B_1, \hdots, B_{r+2}) = (B_{\sigma^{-1}(1)}, \hdots, B_{\sigma^{-1}(r+2)})$, and this action then makes $C_r$ into an $S_{r+2}$-module, where $C_r$ is the vector space over a field of characteristic zero with basis $\Delta_r$.  Let $\Delta(E_n)$ denote the cyclic coloring complex of the complete graph with looped edges.  Using a result from Crown~\cite{cr}, we will obtain the following result:\\

\textbf{Theorem ~\ref{Sn_module}} \emph{The $S_n$-module structure of $HC_r(\Delta(E_n))$ is $S^{\lambda}$ where $\lambda = (n-r-1, 1^{r+1})$.  Moreover, this is the $S_n$-module structure of the multilinear part of $HC_r(\C[x_1, \hdots, x_n])$.}\\

This theorem will give us the following corollary:

\textbf{Corollary ~\ref{Sn_module_corollary}}  \emph{Let $H$ be a hypergraph on $n$ vertices, let $v$ be a vertex of $H$, and let the hyperedges of $H$ be all possible subsets of $[n]$ of size $k$ that contain vertex $v$.  Then the $S_n$-module structure of $HC_{r}(\Delta(H))$ is $S^{\lambda}$ where $\lambda = (n-r-1, 1^{r+1})$.}\\

\hspace{0.5in}In order to obtain the result of Theorem 4.6, we will define a complex $\Delta(H)^C$, whose $r$-faces consist of all ordered set partitions $[B_1, \hdots , B_{r+2}]$, where none of the $B_i$ contain a hyperedge of $H$ and where $1 \in B_1$.  We compute the dimensions of the homology groups of this complex, $HC_{r}(\Delta(H)^C)$, for $r \geq n-k$.  \\

\textbf{Theorem ~\ref{Homology_Complement}}  \emph{Let $H$ be the complete $k$-uniform hypergraph on $n$ vertices.  For $n-2 \geq r > n-k$,
$$\dim(HC_{r}(\Delta(H)^C)) = \binom{n-1}{r+1} $$
and
$$\dim(HC_{n-k}(\Delta(H)^C)) = \binom{n-1}{n-k-1} + \binom{n-1}{n-k+1}.$$}

This gives, as a result, the dimensions of the multilinear parts of the cyclic homology groups of $\C[x_1, \hdots ,x_n]/\{x_{i_1} \hdots x_{i_k} \mid i_1 \hdots i_k$ is a hyperedge of $H \}$. \\ 

\textbf{Corollary ~\ref{cyclic_homology}}  \emph{For the complete $k$-uniform hypergraph on $n$ vertices, $H$, the dimension of the multilinear part of the $r^{th}$ cyclic homology group of $\C[x_1, \hdots ,x_n]/\{ x_{i_1} \hdots x_{i_k} \mid i_1\hdots i_k $ is a hyperedge of $H \}$ is $\binom{n-1}{r+1}$ for $n-k \leq r \leq n-2$ and $\binom{n-1}{n-k-1} + \binom{n-1}{n-k+1}$ for $r = n-k$.}\\

\textbf{Theorem ~\ref{MainResult}}  \emph{Let $H$ be a complete $k$-uniform hypergraph.  Then
$$\dim(HC_{n-k-1}(\Delta(H))) = \binom{n}{n-k}.$$}

Further, for $k = n-1$ and $k=n-2$, we determine the dimensions of $HC_{r}(\Delta(H))$ for all $r$:\\

\textbf{Theorem ~\ref{special_case_1}}  \emph{Let $H$ be the complete $(n-1)$-uniform hypergraph on $n$ vertices.  Then
$$\dim(HC_{0}(\Delta(H))) = \binom{n}{1} = n$$
and
$$\dim(HC_{-1}(\Delta(H))) = \binom{n}{0} = 1.$$}

and\\

\textbf{Theorem ~\ref{special_case_2}}  \emph{Let $H$ be the complete $(n-2)$-uniform hypergraph on $n$ vertices.  Then for $ -1 \leq r \leq 1$, 
$$\dim(HC_{r}(\Delta(H))) = \binom{n}{r+1}.$$}

\section{Preliminaries}

\begin{definition}
A \emph{hypergraph}, $H$, is an ordered pair, $(V,E)$, where $V$ is a set of vertices and $E$ is a set of subsets of $V$.  A \emph{hyperedge} of $H$ is an element of $E$.  A hypergraph is said to be \emph{uniform of rank $k$}, or \emph{$k$-uniform}, if all of its hyperedges have size $k$.  A $k$-uniform hypergraph is \emph{complete} if every subset of size $k$ of $V$ is a hyperedge of $H$.
\end{definition}

\hspace{0.5in}Throughout this paper, $H$ will denote a hypergraph whose vertex set $V$ is $\{1,\hdots,n\}$.  We define the coloring complex of a hypergraph, $H$, following the presentation in Jonsson~\cite{jo}.

\hspace{.5in}Let $ (B_{1}, \hdots, B_{r+2}) $ be an ordered partition
of $ \{ 1,\hdots, n \} $ where at least one of the $B_{i}$ contains a
hyperedge of $H$.  Further, let $\Lambda_{r}(H)$ be the set of ordered partitions
of length $r+2$ in which at least one part contains a hyperedge of $H$, and let  $V_{r}$ be the vector space over a field of characteristic zero with basis $\Lambda_{r}$(H). 

\begin{definition} The \emph{coloring complex} of $H$, denoted $\Lambda(H)$, has as its $r$-faces the elements of the set $\Lambda_r(H)$ and boundary map $\delta_r: V_r \rightarrow V_{r-1}$ given by



\begin{center}
$\delta_r( (B_{1}, \hdots, B_{r+2}) ) := \displaystyle
\sum_{i=1}^{r+1} (-1)^i ( B_1, \hdots, B_{i} \cup B_{i+1}, \hdots,
B_{r+2} ) $.
\end{center}

\end{definition}

Notice that $\delta_{r-1} \circ \delta_{r} = 0$.  Then:

\begin{definition}
The $r^{th}$ homology group of $\Lambda(H)$ is $H_{r}(\Lambda(H)) =
\ker(\delta_{r})/\im(\delta_{r+1})$.
\end{definition}

It is worth noting that Hultman~\cite{hu} defined a complex that includes both Steingr\'{\i}msson's coloring complex and the coloring complex of a hypergraph as a special case.

\hspace{0.5in}In Section 3, we will show that for a complete $k$-uniform hypergraph $H$, $\Lambda(H)$ is shellable, and hence, Cohen-Macaulay.  We will thus need the following definitions:  

\begin{definition}
A simplicial complex $\Delta$ is \emph{Cohen-Macaulay} over a ring $R$ if $\tilde{H}_{i}(\link_{\Delta}(\sigma);R) = 0$ for all $\sigma \in \Delta$ and $i < \dim(\link_{\Delta}(\sigma))$.
\end{definition}

\begin{definition}
A simplicial complex is \emph{pure} if all of its maximal faces have the same dimension.
\end{definition}

Let $F$ and $G$ be sets and suppose $F \subseteq G$.  The \emph{Boolean interval}, denoted $[F, G]$, is the set $\{ H \mid F \subseteq H \subseteq G \}$.  We let $\bar{F}$ denote the Boolean interval $[\emptyset, F]$. 

\begin{definition}
A \emph{shellable} complex, $\Lambda$, is a complex whose facets can be ordered $F_1, \hdots, F_m$ so that the subcomplex $( \bigcup_{i=1}^{l-1} \bar{F_i} ) \cap \bar{F_l}$ is pure and has dimension $(\dim F_l) -1$ for all $l = 2, \hdots, m$.  If $\Lambda$ is shellable, the ordering of the facets $F_1, \hdots, F_m$ is called a \emph{shelling}.
\end{definition}

\hspace{0.5in}In our proof that $\Lambda(H)$ is shellable for a complete $k$-uniform hypergraph ($k>n/2$), we will use Proposition 2.5 from Bj\"orner and Wachs~\cite{bw} which we include here, along with a necessary definition.

\begin{definition}
Let $F_1, \hdots, F_m$ be a shelling of $\Lambda$.  The \emph{restriction} of facet $F_l$, denoted $\mathcal{R}(F_l)$, is the set $\{ x \in F_l \mid F_l - \{x\} \in \bigcup_{i=1}^{l-1} \bar{F_i}\}$.
\end{definition}

\textbf{Proposition 2.5} [Bj\"orner, Wachs~\cite{bw}]  \emph{Given an ordering $F_1, \hdots, F_m$ of the facets of $\Lambda$ and a map $\mathcal{R}: \{F_1, \hdots, F_m\} \rightarrow \Lambda$, the following are equivalent:
\begin{enumerate}
\item $F_1, \hdots, F_m$ is a shelling and $\mathcal{R}$ its restriction map,
\item $\left \{ 
\begin{array}{cc} 
\Lambda = \bigsqcup_{i=1}^{m}[\mathcal{R}(F_i), F_i], \hspace{0.05in} and \\
\mathcal{R}(F_i) \subseteq F_j \hspace{0.05in} implies \hspace{0.05in} i \leq j, \hspace{0.05in} for \hspace{0.05in} all \hspace{0.05in} i, j.
\end{array}
\right.$
\end{enumerate}
}

\hspace{.5in}In Section 3, we will also determine a basis for the unique nontrivial cohomology group of $\Lambda(H)$.  The cohomology groups of $\Lambda(H)$ are defined in the usual manner.  Please see Munkres~\cite{mk} for more information on the computation of cohomology groups.

\hspace{.5in}In Section 4, we will study the cyclic coloring complex of a complete $k$-uniform hypergraph.  To define the cyclic coloring complex, we first must define an equivalence relation on the elements of $\pm \Lambda_r(H)$:

\hspace{.5in}Let $\sigma \in S_{r+2}$ be the $(r+2)$-cycle $(1, 2, \hdots, r+2)$.  Define $\Delta_r(H) = \pm \Lambda_r(H) / \sim$, where $\sim$ is defined by $(B_1, \hdots, B_{r+2}) \sim (-1)^{r+1}(B_{\sigma(1)}, \hdots, B_{\sigma(r+2)})$.  Let $[B_1, \hdots, B_{r+2}]$ denote the equivalence class containing $(B_1, \hdots, B_{r+2})$.  We will represent each equivalence class of $\Delta_r(H)$ by the unique representative that has $1 \in B_1$.  Let
$$\partial_r([B_1, \hdots, B_{r+2}]) := \sum_{i=1}^{r+1}(-1)^{i+1}[B_1, \hdots, B_i \cup B_{i+1}, \hdots, B_{r+2}] + (-1)^{r+3}[B_1 \cup B_{r+2}, B_2, \hdots, B_{r+1}].$$
It is straightforward to check that $\partial$ is well-defined on equivalence classes.

\begin{definition}
The \emph{cyclic coloring complex} of $H$, $\Delta(H)$, is the sequence
\begin{center}
$ \cdots \rightarrow C_{r} \stackrel{\partial_{r}}{\rightarrow} C_{r-1}
\stackrel{\partial_{r-1}}{\rightarrow} ...
\stackrel{\partial_1}{\rightarrow} C_{0}
\stackrel{\partial_0}{\rightarrow} C_{-1}
\stackrel{\partial_{-1}}{\rightarrow} 0$
\end{center}

where $C_{r}$ is the vector space over a field of characteristic
zero.
\end{definition}

Notice that $\partial_{r-1} \circ \partial_r = 0$, so then:

\begin{definition}
The $r^{th}$ homology group of $\Delta(H)$ is $HC_{r}(\Delta(H)) = \ker(\partial_r)/\im(\partial_{r+1})$.
\end{definition}

\hspace{.5in}As mentioned in Crown~\cite{cr}, the motivation for the definition of the cyclic coloring complex comes from cyclic homology.  See Loday~\cite{l1} for more information on cyclic homology.  

\hspace{.5in}In a couple of our arguments, we will consider the homology of the quotient of two cyclic coloring complexes, so we will define this quotient now:

\hspace{.5in}Consider the cyclic coloring complex of a hypergraph $H$, $\Delta(H)$, and consider a subcomplex, $\Delta(I)$, of $\Delta(H)$, where $I$ is a subhypergraph of $H$.  Then $\Delta_r(H)/\Delta_r(I)$ will consist of the partitions $[B_1, \hdots, B_{r+2}]$ of $\Delta(H)$ where none of the $B_l$ contain a hyperedge of $I$.  Thus, we obtain the sequence of complexes:
$$\Delta(I) \hookrightarrow \Delta(H) \stackrel{j}{\rightarrow} \Delta(H)/\Delta(I)$$
where $i$ is the inclusion map and $j$ is the quotient map.  From the homology of the pair, $(\Delta(H), \Delta(I))$, this then induces the long exact sequence:
$$\cdots \rightarrow HC_{r}(\Delta(I)) \stackrel{i_{*}}{\rightarrow} HC_{r}(\Delta(H)) \stackrel{j_{*}}{\rightarrow} HC_{r}(\Delta(H)/\Delta(I)) \stackrel{\partial_{*}}{\rightarrow} HC_{r-1}(\Delta(I)) \rightarrow \cdots$$
where $i_{*}$ is the map induced by the inclusion $\Delta(I) \hookrightarrow \Delta(H)$, $j_{*}$ is the map induced by the quotient map $j$, and $\partial_{*}$ is the map induced by the boundary map $\partial$.

\hspace{.5in}One of our results will relate the rank of the unique nontrivial homology group of $\Lambda(H)$ to the chromatic polynomial of $H$, $\chi_{H}(\lambda)$.  So we include the definition of $\chi_{H}(\lambda)$ here:

\begin{definition}
A proper $\lambda$-coloring of $H$ is a function $f: V \rightarrow \{ 1, \hdots \lambda \}$ such that for each hyperedge, $e$, of $H$ there exist at least two vertices $v_1$ and $v_2$ in $e$ such that $f(v_1) \neq f(v_2)$.  The \emph{chromatic polynomial} of $H$, denoted $\chi_{H}(\lambda)$, is the polynomial whose value at $\lambda$ gives the number of proper $\lambda$-colorings of $H$.
\end{definition}







\section{The Coloring Complex}

\hspace{.5in}Suppose $k \geq 3$.  In this section, we will show that for $k > n/2$, the coloring complex of a complete $k$-uniform hypergraph is shellable, and we will give a formula for the rank of the unique nontrivial homology group of the coloring complex in terms of the chromatic polynomial of the associated hypergraph.  




\begin{theorem}\label{ShellableThm1}Let $H$ be the complete $k$-uniform hypergraph on $n$ vertices and let $k > n/2$.  Then $\Lambda(H)$ is shellable of dimension $n-k-1$.
\end{theorem}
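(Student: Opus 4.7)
My plan is to invoke Proposition 2.5 of Bj\"orner--Wachs (cited in the Preliminaries) by specifying a shelling order on the facets of $\Lambda(H)$ together with a compatible restriction map.

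First I will identify the facets. The hypothesis $k>n/2$ forces that no ordered partition of $[n]$ has two disjoint parts each containing a hyperedge, since such parts would contribute at least $2k>n$ elements. So every face has a unique part of size $\geq k$, and maximizing the number of parts among such partitions compels a \emph{big part} of size exactly $k$ with every other part a singleton. Thus $\dim\Lambda(H)=n-k-1$, and the facets are precisely the ordered partitions $F=(B_1,\ldots,B_{n-k+1})$ with a unique $|B_p|=k$ and $|B_i|=1$ for $i\neq p$.

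I then encode each facet by a pair $(\pi,p)$, where $p$ locates the big part and $\pi\in S_n$ is the word obtained by concatenating $B_1,\ldots,B_{n-k+1}$ with $B_p$ listed in increasing order, so that $\pi(p)<\pi(p+1)<\cdots<\pi(p+k-1)$. I order the facets lexicographically on $\pi$, breaking ties by giving larger $p$ priority. The vertices of $F$ correspond to its $n-k$ cut positions $i$, and removing vertex $i$ yields the codim-$1$ face that merges $B_i$ with $B_{i+1}$. I set $\mathcal{R}(F)$ to consist of those cuts $i$ for which $F\setminus\{i\}$ already belongs to an earlier facet. For $i\notin\{p-1,p\}$ both adjacent parts are singletons (using $k\geq 3$ from the start of the section), and the only other facet containing $F\setminus\{i\}$ is the one that transposes these singletons; hence $i\in\mathcal{R}(F)$ exactly at the corresponding descent of $\pi$. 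For $i=p-1$ an analogous comparison with the facets that keep the big part at position $p$ gives the descent condition $\pi(p-1)>\pi(p)$. For $i=p$ the codim-$1$ face merges the big part with the next singleton into a $(k+1)$-set, which by completeness can be split in $2(k+1)$ ways into a singleton plus a $k$-hyperedge; the reconfiguration that places the big part at position $p+1$ with singleton $\min(B_p\cup B_{p+1})$ at position $p$ always yields a facet that either strictly lex-precedes $F$ or ties with $F$ and wins the larger-$p$ tie-break, so $i=p\in\mathcal{R}(F)$ whenever $p\leq n-k$.

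The final step is to verify the two equivalent conditions of Proposition 2.5: that $\Lambda(H)=\bigsqcup_l[\mathcal{R}(F_l),F_l]$ and that $\mathcal{R}(F_l)\subseteq F_{l'}$ implies $l\leq l'$. The principal obstacle is the case analysis at $i\in\{p-1,p\}$: each codim-$1$ face obtained by merging a singleton with the big part lies in up to $2k+1$ other facets, and one must check that every face of $\Lambda(H)$ lies in exactly one interval $[\mathcal{R}(F_l),F_l]$. The hypothesis $k>n/2$ is precisely what keeps this matching local, since it prevents the merged part from supporting a second disjoint hyperedge that would introduce additional competing facets outside the subword $\pi(p-1),\ldots,\pi(p+k)$. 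Once this bookkeeping is carried out, both Bj\"orner--Wachs conditions follow, and $\Lambda(H)$ is shellable of dimension $n-k-1$.
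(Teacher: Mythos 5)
Your overall strategy is the same as the paper's: order the facets $(B_1,\ldots,B_{n-k+1})$ explicitly, define a candidate restriction map $\mathcal{R}$, and invoke Proposition 2.5 of Bj\"orner--Wachs. Your identification of the facets and of the dimension is correct; your order (lexicographic on the concatenated word $\pi$, ties broken by pushing the big block to the right) differs from the paper's (which sorts first by the position $h(F)$ of the big block, then by the $k$-set, then by the word of singletons); and your local analysis of which codimension-one faces $F\setminus\{i\}$ lie in earlier facets is essentially right, including the observation that $k\geq 3$ limits the facets over a merged pair of singletons to the two transpositions, while completeness gives $2(k+1)$ facets over a merged $(k+1)$-block.

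There is, however, a genuine gap, and it sits exactly where the content of the theorem lies. Defining $\mathcal{R}(F_l)=\{\,i : F_l\setminus\{i\}\in\bigcup_{j<l}\bar{F_j}\,\}$ is possible for \emph{any} total order on the facets of \emph{any} pure complex; it automatically yields that every face of $\bar{F_l}$ not containing $\mathcal{R}(F_l)$ is old. What makes an order a shelling is the converse: the face $\mathcal{R}(F_l)$ itself (and hence every face of $F_l$ containing it) must be \emph{new} at step $l$, so that the new faces form the single interval $[\mathcal{R}(F_l),F_l]$. You never verify this; the sentence ``Once this bookkeeping is carried out, both Bj\"orner--Wachs conditions follow'' defers precisely the step that constitutes the proof. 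Note that when $\mathcal{R}(F_l)$ is a proper face it may have a block consisting of the big block together with several adjacent singletons merged in, and the facets containing that face are then far more numerous than the $2k+1$ adjacent reconfigurations you enumerate: one must compare $F_l$ against every facet refining $\mathcal{R}(F_l)$ and show that none precedes $F_l$. This is also the only place where the hypothesis $k>n/2$ can enter essentially (since $n-k<k$, no union of singleton blocks of a facet ever reaches size $k$, so every face has a unique hyperedge-bearing block and the competition stays localized); for $2<k\leq n/2$ the complex is not shellable, so any argument that omits this verification would prove too much. To complete the proof you must show, for each facet $F$, that $F$ is the earliest facet in your order containing the face $\mathcal{R}(F)$.
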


\begin{proof}

To show that $\Lambda(H)$ is shellable, we will give an ordering of the the facets of $\Lambda(H)$ and will show that there is a unique minimal new face introduced at the $i^{th}$ step.  This will allow us to deduce that $\Lambda(H)$ is a disjoint union of Boolean intervals.  From here it will follow from Proposition 2.5 in Bj\"orner and Wachs~\cite{bw} that $\Lambda(H)$ is shellable.

\hspace{0.5in}Notice that the facets of $\Lambda(H)$ are ordered set partitions, $F=(B_1, \hdots, B_{n-k+1})$ of $[n]$ where exactly one block has size $k$ and the other blocks are singleton blocks.  Let $h(F)$ be the index of the block of size $k$ and let $w(F) = w_1 w_2 \hdots w_{n-k}$ be the permutation of the elements of $[n] \backslash B_{h(F)}$ obtained by listing these elements in the order that they appear in $F$.  Let $G = (B_1', \hdots, B_{n-k+1}')$ be a facet of $\Lambda(H)$, and consider the following ordering of the facets:
\begin{itemize}
\item If $h(F) < h(G)$, then $F$ appears before $G$.
\item If $h(F) = h(G)$ and $B_{h(F)}$ appears before $B'_{h(G)}$ in the lexicographic order on $k$-sets, then $F$ appears before $G$.
\item If $h(F) = h(G)$ and $B_{h(F)} = B'_{h(G)}$, then $F$ appears before $G$ if and only if $w(F)$ appears before $w(G)$ in the lexicographic order on permutations.
\end{itemize}

\hspace{0.5in}We now define a map $\mathcal{R}$ from the set of facets to $\Lambda(H)$.  Let the descent set of $w(F)$ be $\des(w(F)) = \{ d_1, \hdots, d_l \}$.  If $A_F = \{ i \mid i < h(F) \hspace{.1in} \text{and} \hspace{.1in} d_i \in \des(w(F)) \}$ is a nonempty set, then let $t = \max \{ i \mid i \in A_F \}$.  If $w_{h(F)} > \max\{B_{h(F)}\}$, then 
$$\mathcal{R}(F) = (w_1 \cup \hdots \cup w_{d_1}, w_{d_1+1} \cup \hdots \cup w_{d_2}, \hdots,  w_{d_{t}} \cup \hdots \cup w_{h(F)-1}, B_{h(F)} \cup w_{h(F)} \cup \hdots \cup w_{d_{t+1}}, \hdots ,w_{d_l} \cup \hdots \cup w_{n-k}).$$  
Otherwise, 
$$\mathcal{R}(F) = (w_1 \cup \hdots \cup w_{d_1}, w_{d_{1}+1} \cup \hdots \cup w_{d_2}, \hdots,  w_{d_{t}} \cup \hdots \cup w_{h(F)-1}, B_{h(F)}, w_{h(F)} \cup \hdots \cup w_{d_{t+1}}, \hdots ,w_{d_l} \cup \hdots \cup w_{n-k}).$$
On the other hand, suppose $A_F = \emptyset$.  If $w_{h(F)} > \max\{B_{h(F)}\}$, then
$$\mathcal{R}(F) = (w_1 \cup \hdots \cup w_{h(F)-1}, B_{h(F)} \cup w_{h(F)} \cup \hdots \cup w_{d_1}, w_{d_1 +1} \cup \hdots \cup w_{d_2}, \hdots, w_{d_l} \cup \hdots \cup w_{n-k}).$$
Otherwise,
$$\mathcal{R}(F) = (w_1 \cup \hdots \cup w_{h(F)-1}, B_{h(F)}, w_{h(F)} \cup \hdots \cup w_{d_1}, w_{d_1 + 1} \cup \hdots \cup w_{d_2}, \hdots ,w_{d_l} \cup \hdots \cup w_{n-k}).$$

\hspace{0.5in}Let $F_1, \hdots, F_m$ be the ordering of the facets of $\Lambda(H)$ under the above ordering.  We first show that $\mathcal{R}(F_i)$ is a new face introduced at the $i^{th}$ step.  Notice that for a facet $F_i = (w_1, \hdots, w_{h(F_i)-1}, B_{h(F_i)}, w_{h(F_i)}, \hdots, w_{n-k})$ that it follows from the ordering that for $s \neq h(F_i)-1$, the face 
$$(w_1, \hdots, w_s \cup w_{s+1}, \hdots, w_{h(F_i)-1},B_{h(F_i)}, w_{h(F_i)}, \hdots, w_{n-k})$$ 
is in $\bigcup_{j=1}^{i-1} \bar{F_j}$ if and only if $w_s > w_{s+1}$.  Since $w_1 < \hdots < w_{d_{1}}$, it follows that the face 
$$(w_1 \cup \hdots \cup w_{d_1}, w_{d_1 + 1}, \hdots, w_{h(F_i)-1}, B_{h(F_i)}, w_{h(F_i)}, \hdots, w_{n-k})$$
 is not in $\bigcup_{j=1}^{i-1} \bar{F_j}$.  It then follows similarly that for $A_{F_i} \neq \emptyset$
 $$(w_1 \cup \hdots \cup w_{d_1}, w_{d_1+1} \cup \hdots \cup w_{d_2}, \hdots,  w_{t} \cup \hdots \cup w_{h(F_i)-1}, B_{h(F_i)}, w_{h(F_i)} \cup \hdots \cup w_{t+1}, \hdots ,w_{d_l} \cup \hdots \cup w_{n-k})$$ 
is not in $\bigcup_{j=1}^{i-1} \bar{F_j}$.  Further, notice that if $w_{h(F_i)} < \max\{B_{h(F_i)}\}$, then the face 
$$(w_1, \hdots, w_{h(F_i)-1}, B_{h(F_i)} \cup w_{h(F_i)}, w_{h(F_i)+1}, \hdots, w_{n-k})$$ 
is in $\bigcup_{j=1}^{i-1} \bar{F_j}$ since it is also a face of the facet 
$$(w_1, \hdots, w_{h(F_i)-1}, (B_{h(F_i)} - \{ \max \{B_{h(F_i)}\} \}) \cup w_{h(F_i)}, \max\{B_{h(F_i)}\}, w_{h(F_i)+1}, \hdots, w_{n-k}).$$  
So if $w_{h(F_i)} < \max\{B_{h(F_i)}\}$, then 
$$\mathcal{R}(F_i) = (w_1 \cup \hdots \cup w_{d_1}, w_{d_1+1} \cup \hdots \cup w_{d_2}, \hdots,  w_{t} \cup \hdots \cup w_{h(F_i)-1}, B_{h(F_i)}, w_{h(F_i)} \cup w_{t+1}, \hdots ,w_{d_l} \cup \hdots \cup w_{n-k})$$ 
is a new face.  Otherwise, 
$$\mathcal{R}(F_i) = (w_1 \cup \hdots \cup w_{d_1}, w_{d_1+1} \cup \hdots \cup w_{d_2}, \hdots,  w_{t} \cup \hdots \cup w_{h(F_i)-1}, B_{h(F_i)} \cup w_{h(F_i)} \cup \hdots \cup w_{t+1}, \hdots ,w_{d_l} \cup \hdots \cup w_{n-k})$$
is a new face of the complex.  We can similarly argue that if $A_{F_i} = \emptyset$, $\mathcal{R}(F_i)$ is a new face of the complex.

\hspace{0.5in}Notice that the minimality of $\mathcal{R}(F_i)$ follows from the above observations as well as by noting that the face 
$$(w_1, \hdots, w_{h(F_i)-2}, w_{h(F_i)-1} \cup B_{h(F_i)}, w_{h(F_i)}, \hdots w_{n-k})$$
is a face of the facet 
$$(w_1, \hdots, w_{h(F_i)-2}, B_{h(F_i)}, w_{h(F_i)-1}, w_{h(F_i)}, \hdots, w_{n-k}).$$

Since $\mathcal{R}(F_i)$ is the unique minimal new face introduced at the $i$-th step, by induction, we have:
$$\Lambda(H) = \bigsqcup_{i=1}^{m} [\mathcal{R}(F_i), F_i].$$

\hspace{0.5in}From the above argument, it follows that $\mathcal{R}(F_i) \subseteq F_j$ implies $i \leq j$ for all $i, j$.  Then by Proposition 2.5 in Bj\"orner and Wachs~\cite{bw}, the ordering $F_1, \hdots, F_m$ is a shelling order and $\mathcal{R}$ is its restriction map, and $\Lambda(H)$ is shellable. \end{proof}

\hspace{0.5in}As noted in Bj\"orner ~\cite{bj}, shellable complexes are Cohen-Macaulay.  Thus, $\Lambda(H)$ has a unique nontrivial homology group.  By Theorem 4.1 of Long and Rundell~\cite{lr}, we obtain the following corollary:


\begin{corollary}\label{Homology_Complete_Hypergraph}
If $H$ is the complete, $k$-uniform hypergraph on $n$ vertices, then the homology of $\Lambda(H)$ is nonzero only in dimension $n-k-1$, and the dimension of $H_{n-k-1}(\Lambda(H))$ equals the sum of the absolute values of the coefficients of $\chi_{H}(\lambda)$ minus one.  Moreover, the dimension of the $j^{th}$ Hodge piece in the Hodge decomposition of $\Lambda(H)$ is 
$$\dim(H_{n-k-1}^{(j)}(\Lambda(H))) = (-1)^{n-k}[\lambda^{j}](\chi_{H}(-\lambda) - (-\lambda)^n).$$
\end{corollary}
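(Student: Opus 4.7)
The plan is to obtain Corollary~\ref{Homology_Complete_Hypergraph} as an essentially immediate consequence of Theorem~\ref{ShellableThm1} combined with Theorem 4.1 of Long and Rundell~\cite{lr}, so the proof should be quite short. First, Theorem~\ref{ShellableThm1} gives that $\Lambda(H)$ is shellable of dimension $n-k-1$. Invoking Bj\"orner's standard observation (already cited in the excerpt) that shellable complexes are Cohen--Macaulay, we conclude that $\tilde{H}_i(\Lambda(H)) = 0$ for every $i \neq n-k-1$. This establishes the first assertion of the corollary with no further work.

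For the Hodge piece formula, I would apply Theorem 4.1 of Long--Rundell~\cite{lr}, which computes the Euler characteristic of the $j$-th Hodge subcomplex $\Lambda(H)^{(j)}$ and expresses it as $(-1)^{n-k}[\lambda^{j}](\chi_H(-\lambda)-(-\lambda)^n)$ (up to the sign conventions used there). The Hodge decomposition is induced by an eigenspace splitting of an action that commutes with the boundary, so it breaks the chain complex of $\Lambda(H)$ into a direct sum of subcomplexes. Because the total complex is Cohen--Macaulay with homology concentrated in degree $n-k-1$, each Hodge summand likewise has homology concentrated in that single degree; hence its Euler characteristic already records the dimension of $H^{(j)}_{n-k-1}(\Lambda(H))$ up to the sign $(-1)^{n-k-1}$. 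Collecting signs yields the displayed formula exactly.

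Finally, the dimension of the top homology group is obtained by summing the Hodge pieces over $j$. This gives
\[
\dim H_{n-k-1}(\Lambda(H)) = (-1)^{n-k}\bigl(\chi_H(-1)-(-1)^n\bigr),
\]
and using the sign pattern of the coefficients of $\chi_H(\lambda)$ (which for a complete $k$-uniform hypergraph alternate so that $\mathrm{sign}([\lambda^{j}]\chi_H(\lambda))=(-1)^{n-j}$), this collapses to $\sum_{j} |[\lambda^{j}]\chi_H(\lambda)| - 1$, the $-1$ coming from the leading coefficient of $\lambda^n$ that was subtracted off by the $(-\lambda)^n$ term. The only conceptual point that needs attention—but is not really an obstacle—is verifying that the Hodge eigenspace decomposition does split the chain complex into subcomplexes each with concentrated homology; this is immediate from the direct-sum decomposition together with the Cohen--Macaulayness of $\Lambda(H)$, so no new combinatorial argument beyond what~\cite{lr} already provides is required.
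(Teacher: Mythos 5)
Your proposal follows essentially the same route as the paper, whose entire proof consists of observing that shellability (Theorem~\ref{ShellableThm1}) implies Cohen--Macaulayness and hence homology concentrated in dimension $n-k-1$, and then quoting Theorem 4.1 of Long and Rundell~\cite{lr} for the Hodge-piece and coefficient statements. The additional details you supply (converting the Euler characteristic of each Hodge subcomplex into the dimension of its single nonzero homology group) are consistent with that citation, though your asserted sign pattern $(-1)^{n-j}$ for the coefficients of $\chi_H(\lambda)$ is the graph ($k=2$) convention and in general the exponent must carry a dependence on $k$ for the ``sum of absolute values'' collapse to work.
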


\hspace{0.5in}From Theorem ~\ref{ShellableThm1}, we may obtain a basis for the reduced cohomology group $\tilde{H}^{n-k-1}(\Lambda(H), \Z).$  Notice that $\Lambda(H)$ is a shellable pure complex, and let $\Gamma$ be the set of homology facets of the shelling from the proof of Theorem ~\ref{ShellableThm1}, i.e. the set of facets $F$ of $\Lambda(H)$ such that $\mathcal{R}(F) = F$.  By the definition of $\mathcal{R}$, $\Gamma$ is the set of facets $F$ of $\Lambda(H)$ such that $\max\{B_{h(F)}\} > w_{h(F)}$ and the permutation $w(F) = w_1 \hdots w_{n-k}$ satisfies $w_1 > \hdots > w_{h(F) - 1}$ and $w_{h(F)} > \hdots > w_{n-k}$. 

\hspace{0.5in}Following the notation in Bj\"orner and Wachs~\cite{bw}, for each $F \in \Gamma$, let $\sigma^{F}$ denote a $(n-k-1)$-cochain defined by
$$\sigma^{F}(G) = \begin{cases} 1 & \text{if $G=F$} \\ 0 & \text{if $G \neq F$} \end{cases} $$
for $G \in \Lambda_{n-k-1}(H)$.  The fact that $F$ is a facet of $\Lambda(H)$ implies that $\sigma^F$ is a cocycle and therefore determines a cohomology class $[\sigma^{F}]$ if $\tilde{H}^{n-k-1}(\Lambda(H), \Z)$.  Theorem 4.3 in Bj\"orner and Wachs~\cite{bw} then gives the following corollary to Theorem ~\ref{ShellableThm1}:


\begin{corollary}\label{Cohomology_Basis}Let $\Gamma$ be the set of facets $F$ of $\Lambda(H)$ such that $\max\{B_{h(F)}\} > w_{h(F)}$ and the permutation $w(F) = w_1 \hdots w_{n-k}$ satisfies $w_1 > \hdots > w_{h(F) - 1}$ and $w_{h(F)} > \hdots > w_{n-k}$.  Then the classes $[\sigma^{F}]$, for $F \in \Gamma$, are a basis of $\tilde{H}^{n-k-1}(\Lambda(H), \Z)$.
\end{corollary}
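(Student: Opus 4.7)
The plan is to invoke Theorem 4.3 of Bj\"orner and Wachs~\cite{bw}: for any pure shellable complex with shelling $F_1,\dots,F_m$ and restriction map $\mathcal{R}$, the cochain classes $[\sigma^{F_i}]$ indexed by the \emph{homology facets} $\{F_i:\mathcal{R}(F_i)=F_i\}$ form a $\Z$-basis of the top reduced cohomology group. Theorem~\ref{ShellableThm1} already furnishes such a shelling of $\Lambda(H)$ along with an explicit restriction map, so the entire task reduces to showing that the set $\Gamma$ described in the statement is exactly the set of homology facets of this shelling.

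To do this, I would walk through the four cases in the definition of $\mathcal{R}(F)$, according to whether $A_F$ is empty and whether $w_{h(F)}>\max\{B_{h(F)}\}$, and determine when the output equals the input. In every case $\mathcal{R}$ is built from $F$ by (i) coarsening $w_1,\dots,w_{h(F)-1}$ and $w_{h(F)},\dots,w_{n-k}$ into the ascending runs of $w(F)$ determined by its descents, and (ii) possibly fusing $B_{h(F)}$ with $w_{h(F)}$. Step (i) produces no coarsening on the left portion iff every adjacent pair is a descent, giving $w_1>\dots>w_{h(F)-1}$, and analogously $w_{h(F)}>\dots>w_{n-k}$ on the right. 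Since $B_{h(F)}$ sits between $w_{h(F)-1}$ and $w_{h(F)}$, these two positions land in different blocks of $\mathcal{R}(F)$ regardless of their relative order, so no descent hypothesis at the slot $h(F)-1$ is required. Step (ii) is trivial precisely when $\max\{B_{h(F)}\}>w_{h(F)}$, by the case split in the formula. These three conditions are exactly the defining conditions of $\Gamma$.

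Once $\Gamma$ is identified as the set of homology facets, the conclusion follows directly from Bj\"orner--Wachs 4.3; the cocycle property of each $\sigma^F$ requires no separate verification, since $\Lambda(H)$ has no $(n-k)$-faces and hence every top-dimensional cochain is automatically closed. The main obstacle is the piecewise bookkeeping in the middle step: the map $\mathcal{R}$ is defined by cases, and one must confirm in each that "no coarsening occurs" reduces exactly to the three conditions cutting out $\Gamma$, with a little extra care for the boundary values $h(F)=1$ or $h(F)=n-k+1$ in which one of the two strings in $w(F)$ is empty and the corresponding inequality is vacuous.
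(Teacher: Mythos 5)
Your proposal matches the paper's argument: the paper likewise identifies $\Gamma$ as the set of homology facets $\{F : \mathcal{R}(F)=F\}$ of the shelling from Theorem~\ref{ShellableThm1}, reads off the three defining conditions directly from the case definition of $\mathcal{R}$, and then cites Theorem 4.3 of Bj\"orner--Wachs to conclude that the classes $[\sigma^F]$, $F\in\Gamma$, form a basis of $\tilde{H}^{n-k-1}(\Lambda(H),\Z)$. Your additional bookkeeping (no condition needed at the slot $h(F)-1$, the fusion of $B_{h(F)}$ with $w_{h(F)}$ governed by $\max\{B_{h(F)}\}$ vs.\ $w_{h(F)}$, and the vacuous boundary cases) is exactly the verification the paper leaves implicit.
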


\hspace{0.5in}Notice that the condition that $k > n/2$ is necessary in Theorem ~\ref{ShellableThm1}.  If $n/2 \geq k > 2$, the complete $k$-uniform hypergraph contains a pair of disjoint edges.  By Proposition 7 of Breuer, et al.~\cite{bd}, $\Lambda(H)$ is not Cohen-Macaulay and hence not shellable.

\hspace{0.5in}From the shelling order and the argument given in the proof of Theorem ~\ref{ShellableThm1} we also have the following result and corollaries:

\begin{theorem}\label{ShellableThm2}Let $H$ be a $k$-uniform hypergraph on $n$ vertices, and let $v$ be a vertex of $H$.  Suppose that the edge set of $H$ consists of all possible hyperedges of size $k$ containing the vertex $v$.  Then $\Lambda(H)$ is shellable.
\end{theorem}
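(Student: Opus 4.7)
The plan is to mimic the shelling argument used in the proof of Theorem~\ref{ShellableThm1}, with the natural restriction that the distinguished size-$k$ block in each facet is required to contain the vertex $v$. First I would identify the facets of $\Lambda(H)$: since every hyperedge of $H$ contains $v$, a block of an ordered set partition contains a hyperedge of $H$ if and only if that block contains $v$ together with at least $k-1$ other vertices. Hence the maximal ordered set partitions in $\Lambda(H)$ are precisely those $F=(B_1,\ldots,B_{n-k+1})$ in which exactly one block $B_{h(F)}$ has size $k$ (and in particular contains $v$) while the remaining $n-k$ blocks are singletons. This shows that $\Lambda(H)$ is pure of dimension $n-k-1$.

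Next I would import, essentially verbatim, the ordering on facets and the restriction map $\mathcal{R}$ from the proof of Theorem~\ref{ShellableThm1}. That is, order facets first by $h(F)$, then by the lexicographic order on the $k$-subset $B_{h(F)}$ (restricted now to $k$-subsets of $[n]$ containing $v$), and finally by the lexicographic order on the word $w(F)=w_1\cdots w_{n-k}$ obtained by listing $[n]\setminus B_{h(F)}$ in the order it appears in $F$. Define $\mathcal{R}(F)$ by the same case analysis on $\des(w(F))$, the set $A_F$, and the comparison of $w_{h(F)}$ with $\max\{B_{h(F)}\}$, as in Theorem~\ref{ShellableThm1}.

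The bulk of the proof is then to verify that $\mathcal{R}(F_i)$ is a new face of the complex introduced at step $i$, that it is the \emph{unique} minimal such new face, and that $\mathcal{R}(F_i)\subseteq F_j$ forces $i\le j$. Each of these checks is a direct consequence of the combinatorial arguments given in Theorem~\ref{ShellableThm1}: whether merging two adjacent singleton blocks produces an earlier facet depends only on the relative order of the two elements being merged, and whether absorbing $w_{h(F_i)}$ into $B_{h(F_i)}$ (or vice versa) produces an earlier facet is controlled purely by the comparison $w_{h(F_i)}$ vs.\ $\max\{B_{h(F_i)}\}$. These comparisons do not depend on which $k$-sets are declared hyperedges. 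The only small point requiring attention is that whenever the argument replaces $B_{h(F_i)}$ by a lexicographically earlier $k$-set (for instance in witnessing that a proposed face is not new), that $k$-set must still contain $v$; but since every $k$-set that appears as a block in any facet of $\Lambda(H)$ contains $v$, this is automatic.

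The main obstacle I anticipate is thus essentially notational: keeping the case analysis on $A_{F_i}=\emptyset$ versus $A_{F_i}\ne\emptyset$ and on $w_{h(F_i)}$ versus $\max\{B_{h(F_i)}\}$ aligned with the previous proof, and confirming that the restriction of the lex order on $k$-sets to those containing $v$ behaves identically to the unrestricted one for the purposes of these arguments. Once the conditions of Proposition 2.5 of Bj\"orner and Wachs~\cite{bw} are verified, we conclude that $F_1,\ldots,F_m$ is a shelling of $\Lambda(H)$ with restriction map $\mathcal{R}$, and hence $\Lambda(H)$ is shellable. Note that the hypothesis $k>n/2$ needed in Theorem~\ref{ShellableThm1} is not required here, since the shared vertex $v$ automatically prevents disjoint hyperedges.
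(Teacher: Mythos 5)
Your proposal is correct and matches the paper's own treatment: the paper gives no separate argument for this theorem, simply asserting that it follows from the shelling order and argument in the proof of Theorem~\ref{ShellableThm1}, which is exactly the adaptation you carry out. Your added observations --- that purity of dimension $n-k-1$ still holds, that the lex order restricted to $k$-sets containing $v$ behaves identically, and that $k>n/2$ is no longer needed since all hyperedges share $v$ --- are the right points to check and are all sound.
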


\begin{corollary}\label{Homology_Complete_Hypergraph2}
Let $H$ be a $k$-uniform hypergraph on $n$ vertices, and let $v$ be a vertex of $H$.  Suppose that $H$ consists of all possible hyperedges of size $k$ containing the vertex $v$.  Then the homology of $\Lambda(H)$ is nonzero only in dimension $n-k-1$, and the dimension of $H_{n-k-1}(\Lambda(H))$ equals the sum of the absolute values of the coefficients of $\chi_{H}(\lambda)$ minus one.  Moreover, the dimension of the $j^{th}$ Hodge piece in the Hodge decomposition of $\Lambda(H)$ is 
$$\dim(H_{n-k-1}^{(j)}(\Lambda(H))) = (-1)^{n-k}[\lambda^{j}](\chi_{H}(-\lambda) - (-\lambda)^n).$$
\end{corollary}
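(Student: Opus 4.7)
The plan is to deduce the corollary by combining Theorem~\ref{ShellableThm2} with Theorem 4.1 of Long and Rundell~\cite{lr}, in exactly the same manner that Corollary~\ref{Homology_Complete_Hypergraph} was deduced from Theorem~\ref{ShellableThm1}. No new combinatorial input beyond shellability should be required; the content is really that the shelling of Theorem~\ref{ShellableThm2} makes the machinery of \cite{lr} directly applicable.

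First I would record the dimension and purity of $\Lambda(H)$. A facet of $\Lambda(H)$ is a finest ordered set partition of $[n]$ in which some block contains a hyperedge of $H$. Since every hyperedge of $H$ is a $k$-set through $v$, the finest such partitions are exactly those in which one block is a $k$-subset containing $v$ and the remaining $n-k$ blocks are singletons; these have $n-k+1$ parts, so each facet has dimension $n-k-1$. In particular, $\Lambda(H)$ is pure of dimension $n-k-1$. By Theorem~\ref{ShellableThm2}, $\Lambda(H)$ is shellable, and shellable complexes are Cohen-Macaulay (see Bj\"orner~\cite{bj}). Purity together with Cohen-Macaulayness immediately forces $\tilde{H}_i(\Lambda(H)) = 0$ for $i < n-k-1$, so the only nontrivial reduced homology group lies in dimension $n-k-1$.

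Next I would extract the Hodge-piece formula. Theorem 4.1 of \cite{lr} computes the Euler characteristic of each Hodge subcomplex of $\Lambda(H)$ for an arbitrary hypergraph $H$ and expresses it as $(-1)^{n-k}[\lambda^{j}](\chi_{H}(-\lambda) - (-\lambda)^n)$; this applies verbatim here because $H$ is a hypergraph. Because the Hodge decomposition respects the boundary map, each Hodge subcomplex is itself a summand of the chain complex of $\Lambda(H)$, and therefore inherits vanishing reduced homology in all dimensions below $n-k-1$. Hence its Euler characteristic equals the rank of its top homology, giving
\[
\dim H_{n-k-1}^{(j)}(\Lambda(H)) \;=\; (-1)^{n-k}[\lambda^{j}]\bigl(\chi_{H}(-\lambda) - (-\lambda)^n\bigr).
\]
Summing over all Hodge pieces and using that the nonzero coefficients of $\chi_H(\lambda)$ alternate in sign (so that the absolute-value sum coincides with $|\chi_H(-1)|$, and the $(-\lambda)^n$ term removes the leading $\lambda^n$ contribution, leaving a $-1$) yields that $\dim H_{n-k-1}(\Lambda(H))$ equals the sum of the absolute values of the coefficients of $\chi_H(\lambda)$ minus one.

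The only point that requires genuine checking, rather than citation, is the sign-alternation argument that converts the total Euler-characteristic count into the claimed ``sum of absolute values of coefficients minus one''; but this is identical to the reasoning already used for the classical case and for Corollary~\ref{Homology_Complete_Hypergraph}, and there is no structural obstacle peculiar to the present hypergraph $H$. I do not expect any substantive difficulty, since the Long--Rundell Hodge result was stated for arbitrary hypergraphs and the rest is formal.
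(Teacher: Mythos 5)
Your proposal is correct and follows essentially the same route as the paper: the paper deduces this corollary from Theorem~\ref{ShellableThm2} exactly as Corollary~\ref{Homology_Complete_Hypergraph} is deduced from Theorem~\ref{ShellableThm1}, namely shellability implies Cohen-Macaulayness (citing Bj\"orner), which together with purity in dimension $n-k-1$ concentrates the homology in the top degree, and then Theorem 4.1 of Long and Rundell converts the Euler characteristics of the Hodge subcomplexes into the stated coefficient formula. The extra details you supply (the facet description and the sign-alternation bookkeeping) are consistent with what the paper leaves implicit.
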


\begin{corollary}\label{Cohomology_Basis2}Let $H$ be a $k$-uniform hypergraph on $n$ vertices, and let $v$ be a vertex of $H$.  Suppose that the edge set of $H$ consists of all possible hyperedges of size $k$ containing the vertex $v$.  Let $\Gamma$ be the set of facets $F$ of $\Lambda(H)$ where $v \in B_{h(F)}$, $\max\{B_{h(F)}\} > w_{h(F)}$, and the permutation $w(F)$ satisfies $w_1 > \hdots > w_{h(F)-1}$ and $w_{h(F)} > \hdots > w_{n-k}$.  Then the classes $[\sigma^{F}]$, for $F \in \Gamma$, are a basis of $\tilde{H}^{n-k-1}(\Lambda(H), \Z)$.
\end{corollary}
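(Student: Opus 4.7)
The plan is to directly parallel the argument given for Corollary~\ref{Cohomology_Basis}, with Theorem~\ref{ShellableThm2} in place of Theorem~\ref{ShellableThm1}. First I would note that the shelling produced in the proof of Theorem~\ref{ShellableThm2} uses the same ordering of facets and the same restriction map $\mathcal{R}$ as in the proof of Theorem~\ref{ShellableThm1}, merely restricted to the sub-family of facets whose size-$k$ block contains the distinguished vertex $v$. Thus $\Lambda(H)$ is shellable and pure of dimension $n-k-1$: each facet is an ordered partition $F = (B_1,\hdots,B_{n-k+1})$ with exactly one block $B_{h(F)}$ of size $k$ containing $v$ (so that $B_{h(F)}$ is a hyperedge of $H$) and every remaining block a singleton. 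Note also that $\mathcal{R}(F)$ is always a face of $\Lambda(H)$ in this restricted setting, since the block $B_{h(F)}$ (or a superset obtained by merging with adjacent singletons) is present in $\mathcal{R}(F)$ and still contains $v$.

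Then I would invoke Theorem 4.3 of Bj\"orner and Wachs~\cite{bw}, which says that for a shellable pure complex, the cocycles $\sigma^F$ indexed by the homology facets --- those facets $F$ satisfying $\mathcal{R}(F) = F$ --- form a basis of the top reduced cohomology group. The remaining task is therefore to identify the homology facets of this shelling with the set $\Gamma$ in the statement.

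The characterization of $\mathcal{R}(F) = F$ given in the argument following Theorem~\ref{ShellableThm1} (and recycled in the discussion preceding Corollary~\ref{Cohomology_Basis}) translates without change to the present setting and yields the two descent conditions $w_1 > \hdots > w_{h(F)-1}$ and $w_{h(F)} > \hdots > w_{n-k}$, together with $\max\{B_{h(F)}\} > w_{h(F)}$. The additional requirement $v \in B_{h(F)}$ appearing in the description of $\Gamma$ is not an extra hypothesis but simply the condition that $F$ be a facet of $\Lambda(H)$ for this particular hypergraph. Hence the set of homology facets coincides with $\Gamma$, and the corollary follows. The main obstacle is the bookkeeping required to verify, exactly as in the complete case, that the condition $\max\{B_{h(F)}\} > w_{h(F)}$ is what prevents the merger of $B_{h(F)}$ with the singleton $\{w_{h(F)}\}$ from lying in an earlier facet of the shelling; once this is granted, the argument is a direct transcription of the proof of Corollary~\ref{Cohomology_Basis}.
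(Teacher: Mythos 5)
Your proposal matches the paper's (implicit) argument: the paper gives no separate proof of this corollary, stating only that it follows from the same shelling order and restriction map as Theorem~\ref{ShellableThm1} together with Theorem 4.3 of Bj\"orner and Wachs, exactly as you do. Your identification of the homology facets $\mathcal{R}(F)=F$ with the set $\Gamma$ (the two descent conditions plus $\max\{B_{h(F)}\}>w_{h(F)}$, with $v\in B_{h(F)}$ being automatic for facets of this $\Lambda(H)$) is the same identification used for Corollary~\ref{Cohomology_Basis}, so the proposal is correct and essentially identical in approach.
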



\section{The Cyclic Coloring Complex}

\hspace{.5in} In this section, we discuss the homology of the cyclic coloring complex of a complete $k$-uniform hypergraph.  In the proof of Theorem ~\ref{Vertex1Hypergraph}, we will use a spectral sequence argument similar to the proof of Theorem 3.2 in Crown~\cite{cr}.  Thus, before presenting Theorem ~\ref{Vertex1Hypergraph} and its proof, we provide a summary of some of the results of Crown~\cite{cr} that we will need in the proof of Theorem ~\ref{Vertex1Hypergraph}.

\textbf{Theorem 3.2}  [Crown~\cite{cr}]  \emph{Let $\Delta(E_n)$ be the cyclic coloring complex of the complete graph with looped edges at each vertex.  The dimension of the $r$-th homology group of $\Delta(E_n)$, $HC_{r}(\Delta(E_n))$, is $\binom{n-1}{r+1}$.}

\hspace{.5in}The proof of Theorem 3.2 begins by considering the function
$$F([B_1, \hdots, B_{r+2}]) = \mid B_1 \mid$$
where $B_1$ is the block containing the vertex 1.  Let $\Delta_{r}^{m}$ denote the elements of $\Delta_r$ such that
$$F([B_1, \hdots, B_{r+2}]) = m,$$
and let $\Delta(E_{n}^m)$ denote the complex formed by the chains in $\Delta_{r}^m$, $-1 \leq r \leq n-2$.  Notice that $f$ gives a grading of each $\Delta_r(E_n)$.  Define $\Delta_{r}^{(m)}(E_n) = \bigcup_{m \leq i \leq n} \Delta_{r}(E_{n}^i)$.  When the boundary map $\partial$ is applied to an element of $\Delta_{r}(E_{n}^i)$ the result is a signed sum of elements where $\mid B_1 \mid = i$ or $\mid B_1 \mid = i+1$.  Therefore, $\partial(\Delta_{r}^{(m)}(E_n)) \subseteq \Delta_{r-1}^{m}(E_n)$, and thus $\partial$ respects the grading.

\hspace{.5in}The proof of Theorem 3.2 then uses a spectral sequence argument to determine the dimensions of the homology groups of $\Delta(E_n)$ and follows the construction and notation of Chow~\cite{ch}.  Let $C_{r,m}$ be the vector space with basis $\Delta_{r}^{(m)}(E_n)$.  Then $E_{r,m}^0 = C_{r,m}/C_{r,m+1}$, and in particular, $E_{r,m}^0$ is the vector space with basis $\Delta_{r}(E_{n}^m)$ and $C_r \cong \bigoplus_{m=1}^n E_{r,m}^0$.  Further, $\partial$ induces a map
$$\partial^{0}: \bigoplus_{m=1}^n E_{r,m}^0 \rightarrow E_{r-1,m}^0$$
where $\partial^{0}(E_{r,m}^0) \subseteq E_{r-1,m}^0$ for all values of $r,m$.  One can then define:
$$E_{r,m}^1 = HC_{r}(E_{r,m}^0) = \frac{\ker \partial^{0}: E_{r,m}^0 \rightarrow E_{r-1,m}^0}{\text{im } \partial^{0}: E_{r+1,m}^0 \rightarrow E_{r,m}^0}.$$
Further, $\partial$ induces a map:
$$\partial^{1}: E_{r,m}^1 \rightarrow E_{r-1,m+1}^1$$
and we can define
$$E_{r,m}^2 = HC_{r}(E_{r,m}^1) = \frac{\ker \partial^{1}: E_{r,m}^1 \rightarrow E_{r-1,m+1}^1}{\text{im } \partial^{1}: E_{r+1,m-1}^1 \rightarrow E_{r,m}^1}.$$
In the proof of Theorem 3.2, $E_{r,m}^2 = 0$ for all values of $r$ and $m$ which implies that $HC_{r}(\Delta(E_n)) = \bigoplus_{-1 \leq r \leq n-2} E_{r,m}^1$.

\hspace{.5in}To compute the dimension of $HC_r(\Delta(H))$, Crown notes that the elements of $\Delta^{m}(H)$ can be partitioned into subcomplexes determined by the elements of $B_1$.  Each of these subcomplexes has the homology of a Boolean algebra of $n-m$ elements.  A homology representative of the unique nontrivial homology group of the subcomplex is then $\sum_{\sigma \in S_{n-m}} \sgn(\sigma)[B_1, a_{\sigma(1)}, \hdots, a_{\sigma(n-m)}]$, where $\{a_1, \hdots, a_{n-m}\}$ are the elements of the set $[n] \backslash B_1$.  There are $\binom{n-1}{m-1}$ subcomplexes of $\Delta^m(E_n)$, and since $E_{r,m}=0$ for all values of $r$ and $m$, it follows that the dimension of $HC_{r}(\Delta(E_n))$ is given by $\binom{n-1}{r+1}$.

\begin{theorem}\label{Vertex1Hypergraph}Let $H$ be the $k$-uniform hypergraph on $n$ vertices with edge set consisting of all possible hyperedges of size $k$ containing the vertex 1.  Then the dimension of $HC_r(\Delta(H))$ is nonzero for $n-k-1 \geq r \geq -1$ and is given by
$$\dim(HC_{r}(\Delta(H))) = \binom{n-1}{r+1}.$$
\end{theorem}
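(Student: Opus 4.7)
The plan is to run the spectral sequence argument from Crown's Theorem~3.2 (recalled in the paragraphs immediately preceding the statement), adapted to the hyperedge structure of $H$. My first observation is that since every hyperedge of $H$ contains vertex~$1$ and each equivalence class in $\Delta_r(H)$ is represented by the unique tuple with $1 \in B_1$, only $B_1$ can possibly contain a hyperedge. Consequently $\Delta_r(H)$ consists precisely of the classes $[B_1,\ldots,B_{r+2}]$ with $1 \in B_1$ and $|B_1| \ge k$. Taking Crown's filtration $F([B_1,\ldots,B_{r+2}]) = |B_1|$, the grading parameter $m$ now ranges only over $k \le m \le n$, forcing the top degree in each piece to satisfy $r = n-m-1 \in \{-1, 0, \ldots, n-k-1\}$, exactly the range of the theorem.

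On the $E^0$-page, $E^0_{r,m}$ has basis the $r$-faces with $|B_1| = m$, and the part $\partial^0$ of $\partial$ preserving $|B_1|$ consists of the adjacent merges $B_i \cup B_{i+1}$ with $2 \le i \le r+1$; the $i=1$ merge and the wrap-around term both strictly increase $|B_1|$. For each choice of $B_1$ with $1 \in B_1$ and $|B_1| = m$, the fixed-$B_1$ subcomplex under $\partial^0$ is the chain complex of ordered partitions of $[n]\setminus B_1$ under adjacent merges, which by the Boolean-algebra argument of Crown has homology concentrated in top degree $r = n-m-1$ with rank one, generated by
$$z(B_1) = \sum_{\sigma \in S_{n-m}} \sgn(\sigma)\,[B_1, a_{\sigma(1)}, \ldots, a_{\sigma(n-m)}],$$
where $\{a_1,\ldots,a_{n-m}\} = [n]\setminus B_1$. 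Summing over the $\binom{n-1}{m-1}$ admissible choices of $B_1$ gives $\dim E^1_{n-m-1, m} = \binom{n-1}{m-1}$, while $E^1_{r,m} = 0$ off the antidiagonal $r+m = n-1$.

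The decisive step---and what I expect to be the main technical obstacle---is to verify that $\partial^1\colon E^1_{r,m} \to E^1_{r-1, m+1}$ annihilates each $z(B_1)$. The two contributions to $\partial^1$ are the $i=1$ merge (sign $+1$) and the wrap-around term $[B_1 \cup B_{r+2}, B_2, \ldots, B_{r+1}]$ (sign $(-1)^{r+3} = (-1)^{n-m}$). Reindexing the wrap-around sum using the $(n-m)$-cycle that sends position~$1$ to position~$n-m$ and shifts every other position down by one---a permutation of sign $(-1)^{n-m-1}$---converts the wrap-around contribution into exactly the negative of the $i=1$ contribution, so the two cancel term by term and $\partial^1(z(B_1)) = 0$.

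Since $E^1$ is concentrated on the single antidiagonal $r+m = n-1$, any higher differential $\partial^q$ with $q \ge 2$ shifts strictly off this line and vanishes automatically. The spectral sequence thus collapses at $E^1$, giving $HC_r(\Delta(H)) \cong E^1_{r, n-r-1}$ of dimension $\binom{n-1}{n-r-2} = \binom{n-1}{r+1}$ throughout the range $-1 \le r \le n-k-1$.
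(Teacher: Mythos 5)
Your proposal is correct and follows essentially the same route as the paper: the filtration by $|B_1|$, the identification of each fixed-$B_1$ piece of $E^0$ with a Boolean-algebra complex whose top homology is generated by $\sum_{\sigma}\sgn(\sigma)[B_1,a_{\sigma(1)},\ldots,a_{\sigma(n-m)}]$, and the sign cancellation between the $i=1$ merge and the wrap-around term showing $\partial^1$ vanishes. Your observation that the higher differentials die for degree reasons (since $E^1$ is supported on the single antidiagonal $r+m=n-1$) is a slightly cleaner justification of the collapse than the paper's appeal to ``the same argument,'' but it is not a different proof.
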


\begin{proof}  Consider the function 
$$f([B_1, \hdots, B_{r+2}]) = \mid B_1 \mid$$
where $B_1$ is the block containing 1.  Let $\Delta_{r}^{m}(H)$ denote the elements in $\Delta_r(H)$ where 
$$f([B_1, \hdots, B_{r+2}]) = m,$$
and let $\Delta^{m}(H)$ denote the complex formed by the elements in $\Delta_{i}^{m}(H)$, $-1 \leq i \leq n-2$.  Notice that $f$ gives a grading of each $\Delta_{r}(H)$ and that the boundary map $\partial$ respects the grading.  We will use a spectral sequence argument to determine the dimensions of the homology groups of $\Delta(H)$.  

\hspace{0.5in}Notice that $\Delta^{m}(E_n) = \Delta^{m}(H)$ for $n-k+1 \leq m \leq n$.  As in the proof of Theorem 3.2 in Crown~\cite{cr}, we can see that the elements of $\Delta^{m}(H)$ can be partitioned into subcomplexes determined by the elements of $B_1$, and each subcomplex has the homology of a Boolean algebra of $n-m$ elements.  In particular, each subcomplex has a unique nontrivial homology group of rank one with homology representative given by $\displaystyle \sum_{\sigma \in S_{n-m}} \sgn(\sigma) [ B_1, a_{\sigma(1)}, \hdots, a_{\sigma(n-m)}]$, where  $\{ a_1, \hdots, a_{n-m} \}$ are the elements of the set $[n] \backslash B_1$.

\hspace{0.5in}Consider $\partial^{1}(\sum_{\sigma \in S_{n-m}} \sgn(\sigma) [ B_1, a_{\sigma(1)}, \hdots, a_{\sigma(n-m)}])$.  Recall that $\partial^{1}$ is the map obtained by taking those terms of the boundary map, $\partial$, in which the size of $B_1$ increased by one.  So,
$$\partial^{1}([B_1, a_1, \hdots, a_{n-m}]) = [B_1 \cup a_1, a_2, \hdots, a_{n-m}] + (-1)^{n-m}[B_1 \cup a_{n-m}, a_1, \hdots, a_{n-m-1}].$$

\hspace{.5in}Let $\pi$ be a permutation in $S_{n-m}$ and let $\pi(1)=i$.  There exists a unique permutation $\tau$ in $S_{n-m}$ for which
$$\tau(n-m)=i \hspace{0.1in} \text{and} \hspace{0.1in} \tau(1) = \pi(2), \hdots, \tau(n-m-1)=\pi(n-m).$$

Notice that when we apply $\partial^{1}$ to $\sgn(\pi)[B_1, a_{\pi(1)}, \hdots, a_{\pi(n-m)}]$ and $\sgn(\tau)[B_1, a_{\tau(1)}, \hdots, a_{\tau(n-m)}]$, each resulting sum will have the term $[B_1 \cup a_i, a_{\pi(2)}, \hdots, a_{\pi(n-m)}]$.  It remains to show that the coefficients of these terms cancel.  The coefficient of $[B_1 \cup a_i, a_{\pi(2)}, \hdots, a_{\pi(n-m)}]$ in $\partial^{1}(\sgn(\pi)[B_1, a_{\pi(1)}, \hdots, a_{\pi(n-m)}])$ is $\sgn(\pi)$.  Note that $\sgn(\tau) = (-1)^{n-m-1}\sgn(\pi)$, and thus the coefficient of $[B_1 \cup a_i, a_{\pi(2)}, \hdots, a_{\pi(n-m)}]$ in $\partial^{1}(\sgn(\tau)[B_1, a_{\tau(1)}, \hdots, a_{\tau(n-m)}])$ is $(-1)^{n-m-1}\sgn(\pi)*(-1)^{n-m} = -\sgn(\pi)$.  Thus, 
$$\partial^{1}(\sum_{\sigma \in S_{n-m}} \sgn(\sigma) [ B_1, a_{\sigma(1)}, \hdots, a_{\sigma(n-m)}]) = 0.$$
The same argument holds for $\partial^2, \partial^3, \hdots$, and thus the spectral sequence collapses.

\hspace{0.5in}It then follows that to determine the dimension of the homology group $HC_{r}(\Delta(H))$, we must relate $r$ to $m$ and determine the number of subcomplexes of $\Delta^{m}(H)$.  Notice that $r = (n-2)-(m-1)$ and that the number of subcomplexes of $\Delta^{m}(H)$ is given by the number of ways of forming a subset of size $m-1$ from a set of size $n-1$.  Thus,
$$\dim(HC_{r}(\Delta(H)) = \binom{n-1}{r+1}. \qedhere$$
\end{proof}

\hspace{.5in}Notice that the proof of Theorem ~\ref{Vertex1Hypergraph} shows that the homology representatives of the $r^{th}$ homology group of $\Delta(H)$ are indexed by the subsets of size $n-r-2$ of $\{ 2, \hdots, n \}$.  Namely, for each subset, $A$, of size $n-r-2$ of $\{2, \hdots,n\}$, we obtain one homology representative of $HC_{r}(\Delta(H))$, $\displaystyle \sum_{\sigma \in S_{r+1}} \sgn(\sigma) [ A \cup \{1\}, a_{\sigma(1)}, \hdots, a_{\sigma(r+1)}]$, where  $\{ a_1, \hdots, a_{r+1} \}$ is the complement of $A$ in $\{2, \hdots, n \}$.

\hspace{0.5in}We can define an action of $S_{r+2}$ on $\Delta_{r}$, and this action then makes $C_r$ into an $S_{r+2}$-module.  Namely, if $\sigma \in S_{r+2}$, then $\sigma \cdot (B_1, \hdots, B_{r+2}) = (B_{\sigma^{-1}(1)}, \hdots, B_{\sigma^{-1}(r+2)})$.  It is possible to describe the $S_n$-module structure of the $r$-th homology group of $\Delta(E_n)$, and hence, the $S_n$-module structure of the $r$-th homology group of $\Delta(H)$, where $H$ is a complete $k$-uniform hypergraph.


\begin{theorem}\label{Sn_module}
The $S_n$-module structure of $HC_r(\Delta(E_n))$ is $S^{\lambda}$ where $\lambda = (n-r-1, 1^{r+1})$.  Moreover, this is the $S_n$-module structure of the multilinear part of $HC_r(\C[x_1, \hdots, x_n])$.
\end{theorem}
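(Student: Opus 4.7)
The plan is to identify $HC_r(\Delta(E_n))$ with the hook Specht module $S^{(n-r-1,1^{r+1})}$, which equals the $(r+1)$-st exterior power $\bigwedge^{r+1}V$ of the standard $(n-1)$-dimensional irreducible $S_n$-representation $V$. I would do this by first matching $\Delta(E_n)$ at the chain level with the multilinear (multidegree-$(1,\hdots,1)$) part of Connes' cyclic bar complex of $\C[x_1,\hdots,x_n]$, and then applying the classical Hodge decomposition of cyclic homology for polynomial rings. This delivers the ``moreover'' assertion simultaneously.

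For the chain-level identification, I would first note that because $E_n$ has a loop at every vertex, every block of every ordered partition of $[n]$ contains a hyperedge, so $\Delta_r(E_n)$ is free on the cyclic equivalence classes $[B_1,\hdots,B_{r+2}]$ of all ordered set partitions of $[n]$ into $r+2$ parts, taken modulo cyclic rotation with sign $(-1)^{r+1}$. Sending such a class to the cyclic tensor $[x_{B_1}\otimes \cdots \otimes x_{B_{r+2}}]$ with $x_B:=\prod_{i\in B} x_i$ gives an $S_n$-equivariant bijection with the basis of the multidegree-$(1,\hdots,1)$ part of the cyclic bar module of $\C[x_1,\hdots,x_n]$. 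The commutativity of $\C[x]$ ensures that $\partial_r$ matches the Connes--Hochschild boundary term-by-term, with the wrap-around summand in the definition of $\partial_r$ corresponding to the $t$-twisted last term of Connes' $b$-operator. Both $S_n$-actions are by relabeling (vertex labels on one side, variables on the other), so they agree. Thus $HC_r(\Delta(E_n))$ is isomorphic as an $S_n$-module to the multilinear part of $HC_r(\C[x_1,\hdots,x_n])$ in the paper's indexing convention.

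For the exterior-power identification, I would invoke Loday~\cite{l1}: since the higher de Rham cohomology of $\C[x_1,\hdots,x_n]$ vanishes in characteristic zero, $HC_s(\C[x]) \cong \Omega^s_{\C[x]}/d\Omega^{s-1}_{\C[x]}$ for $s\geq 1$. Restricting to multilinear weight identifies $\Omega^s_{\C[x]}$ as an $S_n$-module with $\bigwedge^s \C^n$ (via $x_{I^c}\, dx_I \leftrightarrow e_I$), and the de Rham differential $d$ becomes wedging with $e:=e_1+\cdots+e_n$. Writing $\C^n \cong V \oplus \langle e\rangle$ as $S_n$-modules, one then checks $\bigwedge^s \C^n / (e\wedge \bigwedge^{s-1}\C^n) \cong \bigwedge^s V \cong S^{(n-s,1^s)}$. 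Combined with the one-step index shift from the chain-level identification (paper's $r$-chains have $r+2$ blocks and correspond to Connes' cyclic chains in degree $r+1$), this produces $HC_r(\Delta(E_n)) \cong \bigwedge^{r+1} V \cong S^{(n-r-1,1^{r+1})}$, as claimed.

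The main obstacle will be making the chain-level identification fully precise: aligning the sign $(-1)^{r+1}$ in the cyclic equivalence on $\Delta_r$ with Connes' $\lambda$-operator, matching the wrap-around term of $\partial_r$ with the $t$-twisted piece of the Hochschild $b$-boundary, and reconciling the offset between the number of blocks $r+2$ and the number of tensor factors. Once this $S_n$-equivariant identification is in place, the rest is a direct application of Loday's Hodge computation for polynomial rings, and the dimension count $\binom{n-1}{r+1}$ from Theorem~\ref{Vertex1Hypergraph} provides a useful consistency check on the module structure.
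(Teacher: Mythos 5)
Your proposal is correct in outline but takes a genuinely different route from the paper. The paper never leaves the complex $\Delta(E_n)$: it takes the explicit homology representatives $\sum_{\sigma\in S_{r+1}}\sgn(\sigma)[A\cup\{1\},a_{\sigma(1)},\hdots,a_{\sigma(r+1)}]$ indexed by $(n-r-2)$-subsets $A$ of $\{2,\hdots,n\}$, notes that the restriction to the copy of $S_{n-1}$ acting on $\{2,\hdots,n\}$ is $S^{(n-r-1,1^{r})}\oplus S^{(n-r-2,1^{r+1})}$ by Littlewood--Richardson, and then runs a branching-rule elimination (supplemented by character computations for $n=3,4$) to show that $S^{(n-r-1,1^{r+1})}$ is the only $S_n$-module with that restriction; the ``moreover'' clause is inherited from the identification with cyclic homology rather than re-derived. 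You instead build that identification first, matching $\Delta(E_n)$ with the multilinear part of Connes' cyclic complex of $\C[x_1,\hdots,x_n]$ and then invoking Loday's computation $HC_s\cong\Omega^s/d\Omega^{s-1}$ for polynomial rings, so that the answer appears structurally as $\bigwedge^{r+1}V\cong S^{(n-r-1,1^{r+1})}$. Your route proves both clauses of the theorem simultaneously and explains why a hook shows up, at the cost of outsourcing the main content to Loday and of the bookkeeping you already flag: the dictionary must be with a normalized cyclic complex (blocks of an ordered set partition are nonempty, while cyclic tensors may contain factors equal to $1$), and the degree shift --- $r+2$ blocks corresponding to Connes' degree $r+1$ --- means the phrase ``multilinear part of $HC_r$'' is literally correct only in the shifted indexing the paper inherits from Crown, since in the standard convention the multilinear part of $HC_m$ is $\bigwedge^{m}V$. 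Your exterior-algebra step $\bigwedge^{s}\C^n/(e\wedge\bigwedge^{s-1}\C^n)\cong\bigwedge^{s}V\cong S^{(n-s,1^{s})}$ is correct, and the dimension count $\binom{n-1}{r+1}$ from Theorem~\ref{Vertex1Hypergraph} confirms consistency; by contrast, the paper's argument is elementary and self-contained but yields the ``moreover'' statement only by citation.
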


\begin{proof}  For each subset $A$, of size $n-r-2$ of $\{2, \hdots, n\}$, we obtain one homology representative of $HC_{r}(\Delta(E_n))$, namely $\sum_{\sigma \in S_{r+1}} \sgn(\sigma)[A \cup \{1\}, a_{\sigma(1)}, \hdots, a_{\sigma_({r+1})}]$.  Let $B_1 = A \cup \{1\}$.  Since $1 \in B_1$, consider first the $S_{n-1}$-module structure of $HC_{r}(\Delta(E_n))$, where $S_{n-1}$ is acting on $\{2, \hdots, n\}$.  The homology representative $\sum_{\sigma \in S_{r+1}} \sgn(\sigma)[B_1, a_{\sigma(1)}, \hdots, a_{\sigma_({r+1})}]$, is invariant under permutations of the elements of the set $B_1 \backslash \{1\}$, and if the elements of the set $ \{a_1, \hdots, a_{r+1} \}$ are permuted by $\tau$, then the homology representative is mapped to 
$$\sgn(\tau)\sum_{\sigma \in S_{r+1}} \sgn(\sigma)[B_1, a_{\sigma(1)}, \hdots, a_{\sigma_({r+1})}].$$  Thus,
$$HC_{r}(\Delta(E_n))\downarrow_{S_{n-1}} = (S^{(n-r-2)} \otimes S^{(1^{r+1})}).$$
By the Littlewood-Richardson Rule, we then have:
$$HC_{r}(\Delta(E_n))\downarrow_{S_{n-1}} = S^{(n-r-1, 1^r)} \oplus S^{(n-r-2, 1^{r+1})}.$$

\hspace{0.5in}If $\lambda$ is a partition of $n$, then $S^{\lambda}\downarrow_{S_{n-1}} \cong \bigoplus_{\lambda^{-}} S^{\lambda^{-}}$ where $\lambda^{-}$ is a partition obtained from $\lambda$ by removing one block.  We similarly use the notation $\lambda^{+}$ for a partition obtained from $\lambda$ by adding one block.

\hspace{0.5in}We wish to show that $V = S^{(n-r-1, 1^{r+1})}$ is the only representation such that $V \downarrow_{S_{n-1}} = S^{(n-r-1, 1^{r})} \oplus S^{(n-r-2, 1^{r+1})}$, for $n \geq 5$.  (We treat the cases $n=3$ and $n=4$ separately below.)  Suppose $V = \sum_{\mu} c_{\mu} S^{\mu}$.  Notice that for $\lambda_1 = (n-r-1, 1^r)$, the possibilities for $\lambda_1^+$ are $(n-r, 1^r)$, $(n-r-1, 1^{r+1})$, and $(n-r-1, 2, 1^{r-1})$.  For $\lambda_2 = (n-r-2, 1^{r+1})$, the possibilities for $\lambda_2^{+}$ are $(n-r-1, 1^{r+1})$, $(n-r-2, 1^{r+2})$, and $(n-r-2, 2, 1^{r})$.  So,
$$V = c_1S^{(n-r, 1^r)} + c_2S^{(n-r-1, 1^{r+1})} + c_3S^{(n-r-1, 2, 1^{r-1})} + c_4 S^{(n-r-2, 1^{r+2})} + c_5 S^{(n-r-2, 2, 1^r)}.$$

\hspace{0.5in}Consider the partitions $(n-r-1, 2, 1^{r-1})$ and $(n-r-2, 2, 1^r)$.  In the first case, notice that one of the partitions that is obtained by removing a block is the partition $(n-r-1,2,1^{r-2})$.  When we restrict $V$, the only other possible partition that could contribute a term to cancel the term corresponding to $(n-r-1,2,1^{r-2})$ is the partition $(n-r-2,2,1^r)$.  However, if we remove a block from the partition $(n-r-2,2,1^r)$, we obtain the partition $(n-r-3,2,1^r)$, $(n-r-2,1^{r+1})$, or $(n-r-2,2,1^{r-1})$.  Since none of these partitions are equal to $(n-r-1,2,1^{r-2})$, $c_3 = 0$.  By a similar argument, $c_5 = 0$. 

\hspace{0.5in}Now consider the partitions $(n-r, 1^r)$ and $(n-r-2, 1^{r+2})$.  For the former, if we remove a block from the partition, we obtain either the partition $(n-r, 1^{r-1})$ or the partition $(n-r-1,1^r)$.  When we restrict $V$, the only other possible partition that could contribute terms to cancel the terms corresponding to $(n-r,1^{r-1})$ and $(n-r-1,1^r)$ is the partition $(n-r-2,1^{r+2})$.  However, notice that when we remove a block from the partition $(n-r-2,1^{r+2})$ we obtain either the partition $(n-r-3,1^{r+1})$ or the partition $(n-r-2,1^r)$.  Since neither of these partitions is equal to $(n-r-1,1^r)$ or $(n-r,1^{r-1})$, $c_1 = c_4 = 0$.  Thus, for $n \geq 5$, the restriction $S^{(n-r-1, 1^{r+1})}\downarrow_{S_{n-1}}$ equals $S^{\lambda_1} \oplus S^{\lambda_2}$.

\hspace{0.5in}Suppose that $n=3$.  For $r = 1$ and $r=-1$, the above argument holds.  In particular, the argument shows that the $S_n$-module structure of $HC_{1}(\Delta(E_3))$ is $S^{(1^3)}$, and the $S_n$-module structure of $HC_{-1}(\Delta(E_3))$ is $S^{(3)}$.  Suppose then that $r=0$.  The elements $[12, 3]$ and $[13,2]$ are a set of homology representatives for $HC_{0}(\Delta(E_3))$.  It is straightforward to see that the character on the conjugacy class indexed by the identity element equals 2, the character on the conjugacy class indexed by cycle type $(2,1)$ is 0, and the character on the conjugacy class indexed by cycle type $(3)$ is $-1$.   Thus, the $S_n$-module structure of $HC_{0}(\Delta(E_n))$ is $S^{(2,1)}$.

\hspace{0.5in}Suppose $n=4$.  The cases $r=2$ and $r=-1$ can be proven using the restriction argument above.  For the case $r=0$, $[123, 4]$, $[124, 3]$, and $[134, 2]$ are a set of homology representatives for $HC_{0}(\Delta(E_4))$.  It is straightforward to see that the character on the conjugacy class indexed by the identity element is 3, the character on the conjugacy class indexed by cycle type $(2,1,1)$ is 1, the character on the conjugacy class indexed by cycle type $(2,2)$ is $-1$, the character on the conjugacy class indexed by cycle type $(3,1)$ is zero, and the character on the conjugacy class indexed by cycle type $(4)$ equals $-1$.  Therefore, the $S_4$-module structure of $HC_{0}(\Delta(E_4))$ is $S^{(3,1)}$.  For the case $r=1$, the argument is similar, and it can be seen that the $S_4$-module structure of $HC_{1}(\Delta(E_4))$ is $S^{(2,1,1)}$.  \end{proof}


\begin{corollary}\label{Sn_module_corollary}
Let $H$ be a hypergraph on $n$ vertices, let $v$ be a vertex of $H$, and let the hyperedges of $H$ be all possible subsets of $[n]$ of size $k$ that contain vertex $v$.  Then the $S_n$-module structure of $HC_{r}(\Delta(H))$ is $S^{\lambda}$ where $\lambda = (n-r-1, 1^{r+1})$.
\end{corollary}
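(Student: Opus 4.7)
The plan is to reduce the corollary to Theorem~\ref{Sn_module} by showing that the inclusion of chain complexes $\iota:\Delta(H)\hookrightarrow \Delta(E_n)$ induces an isomorphism $\iota_{*}:HC_{r}(\Delta(H))\to HC_{r}(\Delta(E_n))$ in the stated range $-1\leq r\leq n-k-1$, and then transporting the $S_n$-module structure across this isomorphism. After relabeling vertices we may assume $v=1$. A partition $[B_1,\ldots,B_{r+2}]$ with $1\in B_1$ lies in $\Delta_r(H)$ precisely when $|B_1|\geq k$, because every hyperedge of $H$ contains the vertex $1$ and has size $k$, so only $B_1$ can contain a hyperedge and does so iff $|B_1|\geq k$. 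Every such partition is also a face of $\Delta(E_n)$, so $\iota$ is a well-defined chain map.

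Next I would invoke the explicit description of homology representatives from the proof of Theorem~\ref{Vertex1Hypergraph} and the parallel description for $\Delta(E_n)$ recalled before that theorem (from the proof of Theorem 3.2 of Crown~\cite{cr}). Namely, for each subset $A\subseteq \{2,\ldots,n\}$ of size $n-r-2$, the cycle
$$z_{A}=\sum_{\sigma\in S_{r+1}}\sgn(\sigma)\,[A\cup\{1\},a_{\sigma(1)},\ldots,a_{\sigma(r+1)}],$$
where $\{a_1,\ldots,a_{r+1}\}=\{2,\ldots,n\}\setminus A$, represents a homology class in both $HC_r(\Delta(H))$ and $HC_r(\Delta(E_n))$, and the collection $\{z_A\}_A$ is a basis for each of these spaces, both of dimension $\binom{n-1}{r+1}$. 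The key observation is that $z_A$ is indeed a chain in $\Delta(H)$ exactly because $|A\cup\{1\}|=n-r-1\geq k$ in the stated range. Consequently $\iota_{*}$ sends a basis to a basis and so is an isomorphism of vector spaces.

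Transporting the $S_n$-module structure on $HC_{r}(\Delta(E_n))$ across $\iota_{*}^{-1}$, Theorem~\ref{Sn_module} yields that $HC_{r}(\Delta(H))$ is isomorphic to $S^{\lambda}$ with $\lambda=(n-r-1,1^{r+1})$, as claimed. The main subtlety I expect to have to address carefully is that $H$ itself is not $S_n$-invariant (only the stabilizer $S_{n-1}$ of $v$ permutes its hyperedges), so $S_n$ does not act directly on $\Delta(H)$; the $S_n$-module structure on $HC_{r}(\Delta(H))$ must be understood as the one inherited from $HC_{r}(\Delta(E_n))$ through $\iota_{*}$. Once this viewpoint is adopted, the identification of bases above makes the conclusion immediate from Theorem~\ref{Sn_module}.
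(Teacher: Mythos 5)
Your proposal is correct and follows essentially the same route as the paper, which derives the corollary from Theorem~\ref{Sn_module} together with the observation (made after Theorem~\ref{Vertex1Hypergraph} and used again in the proof of Theorem~\ref{Homology_Complement}) that the homology representatives $\sum_{\sigma}\sgn(\sigma)[A\cup\{1\},a_{\sigma(1)},\ldots,a_{\sigma(r+1)}]$ of $HC_r(\Delta(H))$ coincide with those of $HC_r(\Delta(E_n))$, so that the inclusion induces an isomorphism on homology. Your explicit remark that $S_n$ does not act on $\Delta(H)$ itself and that the module structure must be read through this isomorphism is a point the paper leaves implicit, but it does not change the argument.
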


\hspace{.5in}Let $H$ be the complete $k$-uniform hypergraph on $n$ vertices.  Notice that $\Delta(H)$ is a subcomplex of $\Delta(E_n)$.  Let $\Delta(H)^C = \Delta(E_n)/\Delta(H)$.  Also, notice that if we apply the boundary map of $\Delta(H)^C$ to a partition $[B_1, \hdots,B_{r+2}]$ and if one of the terms in the image contains an edge of $H$, then this particular term is equal to zero.  To compute the homology of $\Delta(H)$, we will first compute the homology of $\Delta(H)^C$.


\begin{theorem} \label{Homology_Complement} Let $H$ be the complete $k$-uniform hypergraph on $n$ vertices.  

For $n-2 \geq r > n-k$,
$$\dim(HC_{r}(\Delta(H)^C)) = \binom{n-1}{r+1} $$
and
$$\dim(HC_{n-k}(\Delta(H)^C)) = \binom{n-1}{n-k-1} + \binom{n-1}{n-k+1}.$$
\end{theorem}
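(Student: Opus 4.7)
The plan is to use the long exact sequence in cyclic homology of the pair $(\Delta(E_n),\Delta(H))$ set up in the preliminaries. The key combinatorial input is that any ordered set partition $[B_1,\ldots,B_{r+2}]$ of $[n]$ into $r+2$ nonempty parts has maximum block size at most $n-(r+1)$, so no block can contain a $k$-hyperedge as soon as $r\geq n-k$. Hence $\Delta_r(H)=\emptyset$ and $HC_r(\Delta(H))=0$ for every $r\geq n-k$.

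For $r$ with $n-k<r\leq n-2$ both $HC_r(\Delta(H))$ and $HC_{r-1}(\Delta(H))$ vanish, so the four-term piece $HC_r(\Delta(H))\to HC_r(\Delta(E_n))\to HC_r(\Delta(H)^C)\to HC_{r-1}(\Delta(H))$ of the LES collapses to an isomorphism $HC_r(\Delta(H)^C)\cong HC_r(\Delta(E_n))$; Crown's Theorem~3.2 then yields $\dim HC_r(\Delta(H)^C)=\binom{n-1}{r+1}$, which is the first formula. For $r=n-k$ the LES instead reads
\[
0\to HC_{n-k}(\Delta(E_n))\to HC_{n-k}(\Delta(H)^C)\to HC_{n-k-1}(\Delta(H))\stackrel{\alpha}{\rightarrow} HC_{n-k-1}(\Delta(E_n)),
\]
so $\dim HC_{n-k}(\Delta(H)^C)=\binom{n-1}{n-k+1}+\dim\ker\alpha$, and it remains to show $\dim\ker\alpha=\binom{n-1}{n-k-1}$.

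To compute $\ker\alpha$, I would run Crown's $|B_1|$-filtration spectral sequence (as in the proof of Theorem~\ref{Vertex1Hypergraph}) on $\Delta(H)$ itself, in parallel with the calculation for $\Delta(E_n)$ recalled at the start of Section~4. For each $k$-subset $B_1\ni 1$, the alternating-sum cycle $z_{B_1}=\sum_{\sigma\in S_{n-k}}\sgn(\sigma)[B_1,a_{\sigma(1)},\ldots,a_{\sigma(n-k)}]$ gives a class in $HC_{n-k-1}(\Delta(H))$, contributing $\binom{n-1}{k-1}$ independent classes from the $|B_1|=k$ stratum; the remaining classes arise from strata $|B_1|<k$ via higher-page differentials, which (in contrast to the $\Delta(E_n)$ or $\Delta(H_1)$ case) no longer cancel, because merges involving $B_1$ can now produce $k$-blocks. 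Then $\alpha(z_{B_1})$ is computed by rewriting $z_{B_1}$ in Crown's $(k+1)$-block basis for $HC_{n-k-1}(\Delta(E_n))$ via the standard ``split off one element of $B_1$'' boundary relation; careful bookkeeping reveals that $\alpha$ is surjective onto the $\binom{n-1}{n-k}$-dimensional space $HC_{n-k-1}(\Delta(E_n))$, and Pascal's identity then delivers $\dim\ker\alpha=\binom{n-1}{n-k-1}$.

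The hard part will be this last step: the sign and combinatorial bookkeeping in the spectral sequence for $\Delta(H)$, together with the basis comparison needed to pin down $\alpha$, which simultaneously establishes surjectivity and the exact kernel dimension. The preceding paragraphs merely set up the LES and reduce the theorem to that single kernel computation.
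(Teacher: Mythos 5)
Your treatment of the range $n-2 \geq r > n-k$ is correct and is exactly the paper's argument: $\Delta_r(H)=\emptyset$ for $r\geq n-k$ since a partition into $r+2$ nonempty blocks has no block of size $\geq k$, so the long exact sequence of the pair collapses to $HC_r(\Delta(H)^C)\cong HC_r(\Delta(E_n))$ and Crown's Theorem 3.2 finishes it.

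For $r=n-k$, however, there is a genuine gap. You correctly reduce the claim to showing $\dim\ker\alpha=\binom{n-1}{n-k-1}$ for $\alpha\colon HC_{n-k-1}(\Delta(H))\to HC_{n-k-1}(\Delta(E_n))$, but the computation you then outline --- running the $|B_1|$-filtration spectral sequence on $\Delta(H)$ itself, identifying the classes in each stratum, and doing ``careful bookkeeping'' to pin down $\alpha$ and its surjectivity --- is precisely the hard content of the theorem, and you explicitly defer it. Note also that your plan inverts the paper's logical order: the identity $\dim HC_{n-k-1}(\Delta(H))=\binom{n}{n-k}$ is Theorem~\ref{MainResult}, which the paper \emph{deduces from} Theorem~\ref{Homology_Complement} using exactly the exact sequence you write down, so you cannot quote it here; and for $\Delta(H)$ (unlike $\Delta(E_n)$ or the star hypergraph) the filtration spectral sequence does not obviously collapse, since merges into $B_1$ can now create forbidden $k$-blocks and the higher differentials need not vanish. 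The paper sidesteps this entirely by working on the quotient side: it lists the $k$-sets $W_1,\ldots,W_{\binom{n}{k}}$ lexicographically, first excises the $\binom{n-1}{k-1}$ sets containing vertex $1$ to form $\Delta(H')^C$ (where the remark after Theorem~\ref{Vertex1Hypergraph} shows the maps $\alpha_r$ are bijective for $r\leq n-k-1$, forcing $\dim HC_{n-k}(\Delta(H')^C)=\binom{n-1}{n-k+1}$), and then excises the remaining $\binom{n-1}{k}$ sets one at a time, identifying each relative complex $\Delta(E_n^{(1,W_l)})$ with a tree complex and invoking Crown's Lemma 3.3 to show each excision contributes exactly $1$ to $\dim HC_{n-k}$. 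You would need either to carry out your deferred kernel computation in full or to adopt some such inductive excision argument; as written the proposal establishes only the first displayed formula.
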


\hspace{.5in}Before presenting the proof of Theorem ~\ref{Homology_Complement}, we will need the following definitions and lemma from Crown~\cite{cr}.  

\hspace{.5in}Let $T_n$ be a tree on $n$ vertices.  Let the root of the tree be labeled 1, and label the other vertices $2, \hdots, n$ so that each parent node has a smaller vertex label than each of its children.  Consider listing each edge of $T_n$ by placing the smaller vertex first and order the edges in lexicographic order.  Let $\Delta(T_n^{(0,l)})$ be the complex formed by the elements $[B_1, \hdots, B_{r+2}]$ where none of the $B_i$ contain one of the first $l$ edges.

\textbf{Lemma 3.3} [Crown~\cite{cr}]  Given a tree on $n$ vertices, for $r \leq n-3$, 
$$\dim(\Delta(T_n^{(0,l)})) = \binom{n-(l+1)}{(r+2)-(l+1)}.$$

We now prove Theorem ~\ref{Homology_Complement}:

\begin{proof}

For the first part of the theorem, consider the following long exact sequence:

\begin{center}
\begin{tabular}{ccccccccc}
$0$ & $\rightarrow$ & $HC_{n-2}(\Delta(H))$ &
$\rightarrow$ & $HC_{n-2}(\Delta(E_{n}))$ &
$\rightarrow$ &
$HC_{n-2}(\Delta(H)^C)$ & $\rightarrow$ & \\
& & $HC_{n-3}(\Delta(H))$ & $\rightarrow$ &
$HC_{n-3}(\Delta(E_n)$ &
$\rightarrow$ & $HC_{n-3}(\Delta(H)^C)$ & $\rightarrow$ &  \\
&  & $HC_{n-4}(\Delta(H))$ & $\rightarrow$ &
$HC_{n-4}(\Delta(E_n))$ & $\rightarrow$ &
$HC_{n-4}(\Delta(H)^C)$ & $\rightarrow$ &
\end{tabular}
\newline \centerline{$\vdots$} \newline
\end{center}

Notice that since $HC_{r}(\Delta(H)) = 0$ for $r > n-k-1$, by exactness, $\dim HC_{r}(\Delta(H)^C) = \dim HC_{r}(\Delta(E_n)) = \binom{n-1}{r+1}$ for $n-2 \geq r > n-k$.

\hspace{.5in}We will now prove the second statement of the theorem.  Let $W_1, \hdots, W_{\binom{n}{k}}$ be the subsets of $\{1, \hdots, n \}$ of size $k$, listed in lexicographic order.  Let $\Delta(E_{n}^{(0, W_l)})$ be the complex formed be the chains $[B_1, \hdots, B_{r+2}]$ where for all $i$, $1 \leq i \leq l$, the elements of $W_i$ are not in the same block of the partition.  Let $\Delta(E_{n}^{(1, W_l)})$ be the complex formed by the chains $[B_1, \hdots, B_{r+2}]$ where for all $i$, $1\leq i \leq l-1$, the elements of $W_i$ are not in the same block of the partition, but the elements of $W_l$ are in the same $B_j$, for some $j$, $1 \leq j \leq r+2$.  Notice then that:
$$\Delta(E_n^{(0,W_{l-1})})/\Delta(E_n^{(1,W_l)}) = \Delta(E_n^{(0,W_l)}) .$$
Using this notation, $\Delta(H)^C = \Delta(E_n^{(0,W_{\binom{n}{k}})})$.  We will compute the homology of $\Delta(H)^C$ by sequentially computing the homology of $\Delta(E_n^{(0,W_l)})$.

\hspace{.5in}  Let $H'$ be the $k$-uniform hypergraph on $n$ vertices with edge set consisting of all possible hyperedges of size $k$ containing the vertex 1.  Let $p = \binom{n-1}{k-1}$.  Since $1 \in W_l$ for $1 \leq l \leq p$, $\Delta(H')^C = \Delta(E_n^{(0,W_{p})})$.  We begin by using the following long exact sequence to calculate the homology of $\Delta(H')^C$:

\begin{center}
\begin{tabular}{ccccccccc}
$0$ & $\rightarrow$ & $HC_{n-2}(\Delta(H'))$ &
$\stackrel{\alpha_{n-2}}{\rightarrow}$ & $HC_{n-2}(\Delta(E_{n}))$ &
$\rightarrow$ &
$HC_{n-2}(\Delta(H')^C)$ & $\rightarrow$ & \\
& & $HC_{n-3}(\Delta(H'))$ & $\stackrel{\alpha_{n-3}}{\rightarrow}$ &
$HC_{n-3}(\Delta(E_n)$ &
$\rightarrow$ & $HC_{n-3}(\Delta(H')^C)$ & $\rightarrow$ &  \\
&  & $HC_{n-4}(\Delta(H'))$ & $\stackrel{\alpha_{n-4}}{\rightarrow}$ &
$HC_{n-4}(\Delta(E_n))$ & $\rightarrow$ &
$HC_{n-4}(\Delta(H')^C)$ & $\rightarrow$ &
\end{tabular}
\newline \centerline{$\vdots$} \newline
\end{center}

By the same argument as in the first part of this proof, we can see that the dimension of $HC_{r}(\Delta(H'))^C$ for $n-2 \geq r \geq n-k+1$ is $\binom{n-1}{r+1}$.  So consider $r = n-k$.  We noted after the proof of Theorem~\ref{Vertex1Hypergraph} that the homology representatives of the $r^{th}$ homology group of $\Delta(H')$ are indexed by the subsets of size $n-r-2$ of $\{ 2, \hdots, n \}$.  As noted in the proof of Theorem 3.2 in Crown~\cite{cr}, these are the same as the homology representatives of the $r^{th}$ homology group of $\Delta(E_n)$.  Thus $\alpha_r$ is bijective for all $r \leq n-k-1$.   By the above argument, it suffices to consider the exact sequence:

\begin{center}
\begin{tabular}{ccccccc}
$0$ & $\rightarrow$ & $HC_{n-k}(\Delta(E_n))$ & $\rightarrow$ & 
$HC_{n-k}(\Delta(H')^C)$ & $\stackrel{\phi_{n-k}}{\rightarrow}$ & \\ 
$HC_{n-k-1}(\Delta(H'))$ & $\stackrel{\alpha_{n-k-1}}{\rightarrow}$ & $HC_{n-k-1}(\Delta(E_n))$ & $\rightarrow$ & 0 & & \\
\end{tabular}
\end{center}

Since $\alpha_{n-k-1}$ is injective, the image of $\phi_{n-k}$ is zero.  Therefore, the dimension of $HC_{n-k}(\Delta(H')^C)$ equals the dimension of the kernel of $\phi_{n-k}$.  By exactness, the dimension of the kernel of $\phi_{n-k}$ equals the dimension of $HC_{n-k}(\Delta(E_n))$.  So the dimension of $HC_{n-k}(\Delta(H')^C)$ is $\binom{n-1}{n-k+1}$.



\hspace{.5in}Now we will sequentially compute $HC_{r}(\Delta(H)^C)$.  Let $l > p$.  Notice that $HC_{r}(\Delta(E_n^{(1,W_l)})$ is zero for $r \geq n-k$.  So by exactness, for $r > n-k$,
$$\dim(HC_{r}(\Delta(E_n^{(0,W_{l})})) = \dim(HC_{r}(\Delta(H')^C)) = \binom{n-1}{r+1}.$$

Further, we know $HC_{r}(\Delta(H')^C) = 0$ for $r \leq n-k-1$.  It suffices then to consider the the exact sequence:

\begin{center}
\begin{tabular}{ccccccc}
0 & $\rightarrow$ & $HC_{n-k}(\Delta(H')^C)$ & $\rightarrow$ & $HC_{n-k}(\Delta(E_n^{(0,W_{p+1})}))$ & $\rightarrow$ \\
 $HC_{n-k-1}(\Delta(E_n^{(1,W_{p+1})}))$ &
$\stackrel{\alpha_{n-k-1}}{\rightarrow}$ & 0 &
$\rightarrow$ &
$HC_{n-k-1}(\Delta(E_n^{(0,W_{p+1})}))$ & $\rightarrow$ & \\
$HC_{n-k-2}(\Delta(E_n^{(1,W_{p+1})}))$ &
$\stackrel{\alpha_{n-k-2}}{\rightarrow}$ & 0 &
$\rightarrow$ &
$HC_{n-k-2}(\Delta(E_n^{(0,W_{p+1})}))$ & $\rightarrow$ & \\
\end{tabular}
\newline \centerline{$\vdots$} \newline
\end{center}

By exactness, $HC_{r}(\Delta(E_n^{(0,W_{p+1})})) \cong HC_{r-1}(\Delta(E_n^{(1,W_{p+1})}))$ for $ n-k-1 \geq r \geq 0$.  Consider the complex $\Delta(E_n^{(1,W_{p+1})})$.  Notice that the homology of this complex is equal to the homology of the complex $\Delta(T_{n-k+1}^{(0,1))})$, where $T_{n-k+1}$ is the tree on $n-k+1$ vertices, with root labeled 1, and edges $(1,2), (1,3), \hdots, (1, n-k+1)$, and where vertices $1$ and $2$ are not in the same block of a chain $[B_1, \hdots, B_{r+2}]$.  Let $\{ a_1, \hdots, a_{n-k-1} \}$ be the elements of the complement of $W_{p+1}$ in $\{ 2, \hdots, n \}$ listed in increasing order.  The isomorphism is given by mapping 1 to 1, $W_{p+1}$ to 2, $a_1$ to 3, $\hdots$, $a_{n-k-1}$ to $n-k+1$.  By Lemma 3.3 of Crown~\cite{cr}, the dimension of $HC_{n-k-1}(\Delta(E_n^{(1,W_{p+1})}))$ is equal to $\binom{(n-k+1)-(1+1)}{((n-k-1)+2)-(1+1)} = 1$, and therefore, $\dim(HC_{n-k}(\Delta(E_n^{(0,W_{p+1})}))) = 1 + \binom{n-1}{n-k+1}$.
 
 \hspace{.5in}We claim that for each subsequent set $W_l$ removed, the contribution to $\dim HC_{n-k}(\Delta(H)^C)$ will be one.  Notice that for all $l$, $ p + 1 \leq l \leq \binom{n}{k}$, the homology of the complex $\Delta(E_n^{(1, W_l)})$ is equal to the homology of the complex $\Delta(T_{n-k+1}^{(0,i)})$ for some $i$.  By Lemma 3.3 of Crown, the dimension of $HC_{n-k-1}(\Delta(E_n^{(0,W_l)}))$ is equal to $\binom{(n-k+1)-(l+1)}{((n-k-1)+2)-(l+1)} = 1$.  By exactness, for each subsequent set $W_l$ removed, the contribution to $\dim(HC_{n-k}(\Delta(H)^C))$ will be one.  Since there are $\binom{n-1}{k}$ sets $W_l$ of size $k$ of $[n-1]$, $\dim(HC_{n-k}(\Delta(H)^C)) = \binom{n-1}{k} + \binom{n-1}{n-k+1} = \binom{n-1}{n-k-1} + \binom{n-1}{n-k+1}$. \qedhere
\end{proof}

\hspace{.5in}Notice that the elements of $\Delta_r(H)^C$ are in bijection with the cyclic words $[D_1, \hdots,D_{r+2}]$ where $D_i \in \C[x_1, \hdots,x_n]/\{ x_{i_1} \hdots x_{i_k} \mid i_1 \hdots i_k $ is a hyperedge of $H \}$ and $[D_1, \hdots,D_{r+2}]$ is an ordered partition of $x_1\hdots x_n$ with $x_1 \in D_1$.  It then follows that we have the following corollary:


\begin{corollary} \label{cyclic_homology} For the complete $k$-uniform hypergraph on $n$ vertices, $H$, the dimension of the multilinear part of the $r^{th}$ cyclic homology group of $\C[x_1, \hdots,x_n]/\{ x_{i_1} \hdots x_{i_k} \mid i_1 \hdots i_k $ is a hyperedge of $H \}$ is $\binom{n-1}{r+1}$ for $n-k \leq r \leq n-2$ and $\binom{n-1}{n-k-1} + \binom{n-1}{n-k+1}$ for $r = n-k$.
\end{corollary}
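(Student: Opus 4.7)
The plan is to deduce the corollary directly from Theorem \ref{Homology_Complement} by exhibiting an explicit chain-level isomorphism between $\Delta(H)^C$ and the multilinear part of the Hochschild/cyclic complex of the quotient algebra $A := \C[x_1,\dots,x_n]/\{x_{i_1}\cdots x_{i_k}\mid i_1\cdots i_k \text{ is a hyperedge of } H\}$. The remark immediately preceding the corollary already observes the set-level bijection: a basis element $[B_1,\dots,B_{r+2}]$ of $\Delta_r(H)^C$, where $1 \in B_1$ and no $B_j$ contains a hyperedge of $H$, corresponds to the cyclic word $[D_1,\dots,D_{r+2}]$ with $D_j := \prod_{i \in B_j} x_i$, which is a non-zero element of $A$ precisely because no block contains a hyperedge. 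The condition $1 \in B_1$ selects a canonical representative of each cyclic equivalence class, matching exactly the convention used for $\Delta_r(H)^C$.

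First, I would make this bijection precise on the chain level, giving an isomorphism $\Phi\colon C_r(\Delta(H)^C) \to \operatorname{Mult}(CC_r(A))$ of vector spaces, where $\operatorname{Mult}$ denotes the multilinear component in degree $(1,1,\dots,1)$ (i.e., spanned by cyclic words whose product of entries equals $x_1 x_2 \cdots x_n$). Next I would verify that $\Phi$ intertwines the boundary maps: the cyclic-homology boundary $b$ on a multilinear cyclic word $[D_1,\dots,D_{r+2}]$ produces $\sum (-1)^{i+1}[D_1,\dots,D_iD_{i+1},\dots,D_{r+2}] + (-1)^{r+3}[D_1 D_{r+2}, D_2,\dots,D_{r+1}]$, with the convention that any term in which two consecutive factors multiply to zero in $A$ (i.e., whose combined variable set contains a hyperedge) vanishes. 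This is precisely the boundary $\partial_r$ defined on $\Delta(H)^C$ (where terms containing a hyperedge are killed in the quotient), so the two chain complexes are isomorphic.

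Consequently $HC_r(\Delta(H)^C) \cong \operatorname{Mult}(HC_r(A))$, and substituting the dimensions given by Theorem \ref{Homology_Complement} yields the stated formulas: $\binom{n-1}{r+1}$ for $n-k < r \leq n-2$ and $\binom{n-1}{n-k-1}+\binom{n-1}{n-k+1}$ for $r=n-k$. The only step requiring genuine care is the compatibility of boundary maps, and specifically the sign convention arising from the cyclic equivalence: one must check that the representative-choice $1 \in B_1$ on the combinatorial side corresponds correctly to the standard cyclic-word normalization on the algebraic side, so that the wraparound term $(-1)^{r+3}[B_1 \cup B_{r+2}, B_2, \dots, B_{r+1}]$ matches the Connes cyclic boundary term in $CC_*(A)$. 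Beyond this sign bookkeeping the argument is a direct translation, so no further homological work is needed.
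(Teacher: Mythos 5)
Your proposal is correct and follows essentially the same route as the paper: the paper also deduces the corollary from Theorem \ref{Homology_Complement} via the bijection between elements of $\Delta_r(H)^C$ and multilinear cyclic words $[D_1,\hdots,D_{r+2}]$ in the quotient algebra, which it records in the remark immediately preceding the statement. Your additional care about the chain-level boundary compatibility and the cyclic sign normalization is exactly the (unstated) content behind the paper's ``it then follows.''
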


We will now determine the dimension the $(n-k-1)^{st}$ homology group of $\Delta(H)$ for a complete $k$-uniform hypergraph:


\begin{theorem}\label{MainResult}
Let $H$ be a complete $k$-uniform hypergraph.  Then
$$\dim(HC_{n-k-1}(\Delta(H))) = \binom{n}{n-k}.$$
\end{theorem}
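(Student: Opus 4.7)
The plan is to exploit the long exact sequence arising from the short exact sequence of complexes $0 \rightarrow \Delta(H) \hookrightarrow \Delta(E_n) \rightarrow \Delta(H)^C \rightarrow 0$, combining Crown's Theorem 3.2 ($\dim HC_r(\Delta(E_n)) = \binom{n-1}{r+1}$) with Theorem~\ref{Homology_Complement} (dimensions of $HC_r(\Delta(H)^C)$ for $r \geq n-k$), together with a dimension-count vanishing on $\Delta(H)$ itself.

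First I would observe that $\Delta(H)$ has dimension $n-k-1$: an $r$-face $[B_1,\hdots,B_{r+2}]$ must contain one block of size at least $k$ together with $r+1$ further nonempty blocks, which forces $r \leq n-k-1$. Consequently $HC_r(\Delta(H))=0$ for $r \geq n-k$, and the relevant segment of the long exact sequence collapses to
$$0 \rightarrow HC_{n-k}(\Delta(E_n)) \rightarrow HC_{n-k}(\Delta(H)^C) \stackrel{\partial_*}{\rightarrow} HC_{n-k-1}(\Delta(H)) \stackrel{i_*}{\rightarrow} HC_{n-k-1}(\Delta(E_n)) \stackrel{j_*}{\rightarrow} HC_{n-k-1}(\Delta(H)^C).$$

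The crucial (non-formal) step will be to show that $j_* = 0$. Here I would invoke the explicit homology representatives recorded after the proof of Theorem~\ref{Vertex1Hypergraph} (the same basis as in Crown's Theorem 3.2): $HC_{n-k-1}(\Delta(E_n))$ is spanned by classes indexed by subsets $A \subseteq \{2,\hdots,n\}$ of size $k-1$, with cycle representative $\sum_{\sigma \in S_{n-k}} \sgn(\sigma)\,[A\cup\{1\}, a_{\sigma(1)}, \hdots, a_{\sigma(n-k)}]$. Since $|A \cup \{1\}| = k$ and every $k$-subset of $[n]$ is a hyperedge of $H$, each chain in this sum already lies in the subcomplex $\Delta(H) \subseteq \Delta(E_n)$ and hence is killed by the quotient map to $\Delta(H)^C$. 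Thus $j_* = 0$, so by exactness $i_*$ is surjective, with image of dimension $\binom{n-1}{n-k}$.

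The remainder is dimension bookkeeping. Injectivity of the leftmost map, combined with Theorem~\ref{Homology_Complement} and Crown's Theorem 3.2, gives
$$\dim \im(\partial_*) = \left(\binom{n-1}{n-k-1} + \binom{n-1}{n-k+1}\right) - \binom{n-1}{n-k+1} = \binom{n-1}{n-k-1},$$
and exactness at $HC_{n-k-1}(\Delta(H))$ then yields
$$\dim HC_{n-k-1}(\Delta(H)) = \dim \im(\partial_*) + \dim \im(i_*) = \binom{n-1}{n-k-1} + \binom{n-1}{n-k} = \binom{n}{n-k}$$
by Pascal's identity, which is the claim. The only real obstacle is verifying that $j_* = 0$; once that is in hand, the rest is forced by the long exact sequence together with the known external dimensions.
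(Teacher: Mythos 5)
Your proposal is correct and follows essentially the same route as the paper: the long exact sequence of the pair $(\Delta(E_n),\Delta(H))$, the observation that every homology representative of $HC_{n-k-1}(\Delta(E_n))$ has first block $A\cup\{1\}$ of size $k$ (hence a hyperedge) and so dies under the quotient map, and the same dimension count via Theorem~\ref{Homology_Complement} and Crown's Theorem 3.2. Your explicit remark that $HC_r(\Delta(H))=0$ for $r\geq n-k$ because $\dim\Delta(H)=n-k-1$ is a detail the paper leaves implicit in the leading $0$ of its exact sequence.
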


\begin{proof}

Consider the following long exact sequence:

\begin{center}
\begin{tabular}{cccccccc}
0 & $\rightarrow$ & $HC_{n-k}(\Delta(E_n))$ & $\rightarrow$ & $HC_{n-k}(\Delta(H)^C)$ & $\stackrel{\phi_{n-k}}{\rightarrow}$ & \\
$HC_{n-k-1}(\Delta(H))$ & $\stackrel{\alpha_{n-k-1}}{\rightarrow}$ & $HC_{n-k-1}(\Delta(E_n))$ & $\stackrel{\beta_{n-k-1}}{\rightarrow}$ & $HC_{n-k-1}(\Delta(H)^C)$ & $\rightarrow$ & $\cdots$ \\
\end{tabular}
\end{center}

As noted in the proof of Theorem ~\ref{Homology_Complement}, each of the homology representatives of $HC_{r}(\Delta(E_n))$ corresponds to a subset, $A$, of $\{2, \hdots, n \}$ of size $n-r-2$.  Since the set $\{1\} \cup A$ is an edge of $H$, each of the homology representatives of $HC_{n-k-1}(\Delta(E_n))$ is mapped to zero by the map $\beta_{n-k-1}$.  Therefore, the dimension of the kernel of $\beta_{n-k-1}$ is $\binom{n-1}{n-k}$.  By exactness and Theorem 3.2 of Crown~\cite{cr}, the dimension of the kernel of $\phi_{n-k} = \binom{n-1}{n-k+1}$.  Thus,
\begin{align*}
\dim(HC_{n-k-1}(\Delta(H))) & =  \left( \dim(HC_{n-k}(\Delta(H)^C)) - \binom{n-1}{n-k+1} \right) + \binom{n-1}{n-k} \\
 & =  \binom{n-1}{n-k-1} + \binom{n-1}{n-k} \\
 & =  \binom{n}{n-k} \hspace{3.9in} \qedhere
 \end{align*} 
\end{proof}

When $k = n-1$ and $k=n-2$, we have the results:

\begin{theorem} \label{special_case_1}Let $H$ be the complete $(n-1)$-uniform hypergraph on $n$ vertices.  Then
$$\dim(HC_{0}(\Delta(H))) = \binom{n}{1} = n$$
and
$$\dim(HC_{-1}(\Delta(H))) = \binom{n}{0} = 1.$$
\end{theorem}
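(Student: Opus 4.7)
The plan is to establish the two dimension formulas separately. The first, $\dim(HC_{0}(\Delta(H))) = n$, is an immediate application of Theorem~\ref{MainResult}: specializing to $k = n-1$ gives $n-k-1 = 0$, and the theorem yields $\dim HC_0(\Delta(H)) = \binom{n}{n-k} = \binom{n}{1} = n$. No further work is needed in this case.

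For the second equality I would argue directly at the chain level. The group $C_{-1}$ is spanned by $\Delta_{-1}(H)$, whose elements are equivalence classes $[B_1]$ of one-block ``ordered partitions'' of $[n]$ in which $B_1$ contains a hyperedge of $H$. The only possibility is $B_1 = [n]$, and since every $(n-1)$-subset of $[n]$ is a hyperedge, this block does contain one. Hence $\Delta_{-1}(H) = \{[[n]]\}$ and $\dim C_{-1} = 1$. It remains to compute $\im(\partial_0)$; applying the boundary formula to an arbitrary $[B_1, B_2] \in \Delta_0(H)$ produces two terms, each equal to $[B_1 \cup B_2] = [[n]]$, one with sign $(-1)^2$ from the linear part and one with sign $(-1)^{0+3}$ from the cyclic part, so
$$\partial_0([B_1, B_2]) = [B_1 \cup B_2] - [B_1 \cup B_2] = 0.$$
Consequently $\partial_0$ vanishes identically, and since $\partial_{-1} = 0$ as well, we obtain $HC_{-1}(\Delta(H)) = C_{-1}$, of dimension $1 = \binom{n}{0}$.

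Neither step presents a real obstacle. The first identity is a direct specialization of the main theorem, and the second reduces to an essentially dimensional check: the complex $\Delta(H)$ has only one $(-1)$-face, and that face survives the quotient because the differential landing in $C_{-1}$ is the zero map. The only mild subtlety is confirming that the canonical representative with $1 \in B_1$ is used, which holds automatically since $B_1 = [n]$ always contains $1$.
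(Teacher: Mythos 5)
Your proposal is correct. For the second identity your argument is the same as the paper's: $\Delta_{-1}(H)$ consists of the single class $[12\hdots n]$, and since $\partial_0([B_1,B_2]) = (-1)^{2}[B_1\cup B_2] + (-1)^{3}[B_1\cup B_2] = 0$, the image of $\partial_0$ vanishes and $HC_{-1}(\Delta(H)) \cong C_{-1}$ has dimension $1$. For the first identity you take a different route: you specialize Theorem~\ref{MainResult} to $k=n-1$, which is legitimate since that theorem is stated for all complete $k$-uniform hypergraphs and is proved earlier and independently (the paper itself opens the proof of Theorem~\ref{special_case_2} with exactly this move for $k=n-2$). The paper's own proof of Theorem~\ref{special_case_1}, by contrast, is a direct count: a two-block ordered partition of $[n]$ in which some block contains an $(n-1)$-element hyperedge must consist of an $(n-1)$-block and a singleton, giving exactly $\binom{n}{1}=n$ elements of $\Delta_0(H)$, each killed by $\partial_0$; and since $\Delta_1(H)$ is empty there is no image to quotient by, so $\dim HC_0(\Delta(H)) = n$. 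Your version buys brevity at the cost of importing the long-exact-sequence machinery behind Theorem~\ref{MainResult}; the paper's count is elementary and self-contained, and doubles as an independent check of that machinery in the smallest case. Both arguments are valid, and the one small point worth making explicit in your write-up of the $HC_0$ case (if you ever replace the citation with a direct argument) is the emptiness of $\Delta_1(H)$, which is what guarantees $\im(\partial_1)=0$.
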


\begin{proof}

Note that there $\binom{n}{1}$ elements in $\Delta_0(H)$ and each of these elements is mapped to zero under $\partial_0$.  Thus, $\dim(HC_0(\Delta(H))) = n$.  Since $\Delta_{-1}(H) = \{ [12 \hdots n ] \}$, it follows that $\dim(HC_{-1}(\Delta(H))) = 1$. \end{proof}



\begin{theorem} \label{special_case_2}Let $H$ be the complete $(n-2)$-uniform hypergraph on $n$ vertices.  Then for $ -1 \leq r \leq 1$, 
$$\dim(HC_{r}(\Delta(H))) = \binom{n}{r+1}.$$
\end{theorem}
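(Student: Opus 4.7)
The plan is to handle the three cases $r = -1$, $r = 1$, and $r = 0$ separately. The cases $r = \pm 1$ are essentially immediate: for $r = -1$, the set $\Delta_{-1}(H)$ consists solely of $[12 \cdots n]$ and $\partial_{-1}$ is the zero map, giving $\dim HC_{-1}(\Delta(H)) = 1 = \binom{n}{0}$; for $r = 1 = n-k-1$, Theorem~\ref{MainResult} directly yields $\dim HC_1(\Delta(H)) = \binom{n}{n-k} = \binom{n}{2}$.

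The substantive case is $r = 0$, which I will handle via the long exact sequence of the pair $(\Delta(H), \Delta(H'))$, where $H'$ is the sub-hypergraph of $H$ consisting of the $(n-2)$-hyperedges that contain vertex $1$. By Theorem~\ref{Vertex1Hypergraph}, the terms $\dim HC_r(\Delta(H'))$ equal $1, n-1, \binom{n-1}{2}$ for $r = -1, 0, 1$, respectively, and vanish for $r \geq 2$. Let $Q := \Delta(H)/\Delta(H')$. A chain $[B_1, \ldots, B_{r+2}]$ lies in $Q$ precisely when $|B_1| \leq n-3$ (so the unique block containing $1$ holds no edge of $H'$) while some $B_i$ with $i \geq 2$ has $|B_i| \geq n-2$ (so it holds an edge of $H \setminus H'$). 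Since such a large block leaves at most two elements across all remaining blocks, $Q$ is concentrated in dimensions $0$ and $1$.

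An explicit enumeration then produces, for $r = 0$, the $n$ chains $v = [\{1\}, \{2,\ldots,n\}]$ and $u_a = [\{1,a\}, \{2,\ldots,n\}\setminus\{a\}]$ ($a = 2, \ldots, n$), and for $r = 1$ the $2(n-1)$ chains $x_a = [\{1\}, \{a\}, \{2,\ldots,n\}\setminus\{a\}]$ and $y_a = [\{1\}, \{2,\ldots,n\}\setminus\{a\}, \{a\}]$. Applying $\partial_1$ to either $x_a$ or $y_a$, the summand merging $B_1$ with the $(n-2)$-block produces a block of size $n-1$ containing vertex $1$, which lies in $\Delta(H')$ and is killed in $Q$; the two surviving summands yield $\partial^Q_1 x_a = \partial^Q_1 y_a = u_a - v$. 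Hence $\ker \partial^Q_1$ has basis $\{x_a - y_a\}_{a=2}^{n}$ and $\im \partial^Q_1$ has basis $\{u_a - v\}_{a=2}^{n}$, so $\dim HC_1(Q) = n-1$ and $\dim HC_0(Q) = 1$.

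Substituting these values into the long exact sequence
\[
\cdots \to HC_1(Q) \xrightarrow{\phi_1} HC_0(\Delta(H')) \to HC_0(\Delta(H)) \to HC_0(Q) \xrightarrow{\phi_0} HC_{-1}(\Delta(H')) \to HC_{-1}(\Delta(H)) \to 0,
\]
the map $HC_{-1}(\Delta(H')) \to HC_{-1}(\Delta(H))$ is an isomorphism of one-dimensional spaces (both generated by $[12 \cdots n]$), so $\phi_0 = 0$; and a dimension count using $\dim HC_1(\Delta(H')) = \binom{n-1}{2}$, $\dim HC_1(\Delta(H)) = \binom{n}{2}$, and $\dim HC_1(Q) = n-1$ forces $\phi_1 = 0$ as well. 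The residual short exact sequence $0 \to HC_0(\Delta(H')) \to HC_0(\Delta(H)) \to HC_0(Q) \to 0$ then gives $\dim HC_0(\Delta(H)) = (n-1) + 1 = n$, as desired. The main obstacle is the bookkeeping for $Q$: one must correctly classify its chains, verify the collapse $\partial^Q_1 x_a = \partial^Q_1 y_a$, and justify the vanishing of both connecting homomorphisms.
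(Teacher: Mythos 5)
Your argument is correct, and for $r=-1$ and $r=1$ it is essentially the paper's own (direct inspection of $\Delta_{-1}(H)$, and Theorem~\ref{MainResult}, respectively); but for $r=0$ you take a genuinely different route. The paper's proof of the $r=0$ case is a short dimension count: $\Delta_2(H)=\emptyset$, so Theorem~\ref{MainResult} gives $\dim\ker\partial_1=\binom{n}{2}$, and since $|\Delta_1(H)|=2\binom{n}{2}$ the rank of $\partial_1$ is $\binom{n}{2}$; as $\partial_0=0$, one gets $\dim HC_0(\Delta(H))=|\Delta_0(H)|-\binom{n}{2}=\left(n+\binom{n}{2}\right)-\binom{n}{2}=n$. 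You instead invoke the long exact sequence of the pair $(\Delta(H),\Delta(H'))$, which the paper does not use for this theorem. Your bookkeeping for the quotient $Q$ checks out: the characterization of its chains is right (note that $u_a\in Q$ requires $2\le n-3$, i.e.\ $k=n-2\ge 3$, consistent with the paper's standing assumption $k\ge 3$), the identity $\partial_1^Q x_a=\partial_1^Q y_a=u_a-v$ holds because the merged block $\{1\}\cup W_a$ contains a hyperedge through the vertex $1$ and so dies in the quotient, and the rank inequality $\dim(\im j_*)\ge\binom{n}{2}-\binom{n-1}{2}=n-1=\dim HC_1(Q)$ does force $\phi_1=0$, just as the isomorphism on $HC_{-1}$ forces $\phi_0=0$. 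The trade-off: your route additionally shows that $i_*\colon HC_r(\Delta(H'))\to HC_r(\Delta(H))$ is injective for $r=0,1$ and exhibits $HC_0(\Delta(H))$ as an extension of the one-dimensional $HC_0(Q)$ by $HC_0(\Delta(H'))$, but it leans on Theorem~\ref{Vertex1Hypergraph} and the mechanics of the pair sequence, whereas the paper's count needs only Theorem~\ref{MainResult} together with the cardinalities of $\Delta_0(H)$ and $\Delta_1(H)$. Both proofs ultimately rest on Theorem~\ref{MainResult} for the top-dimensional input.
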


\begin{proof}

From Theorem ~\ref{MainResult}, we know that $\dim(HC_{1}(\Delta(H))) = \binom{n}{2}$.   The set $\Delta_{1}(H)$ consists of all ordered partitions $ [B_1, B_2, B_3] $ where $\{1\} \in B_1$ and where one of the $B_i$ is a hyperedge of $H$.  It follows then that there are $2 \binom{n-1}{n-3} + 2 \binom{n-1}{n-2} = 2 \binom{n}{n-2}$ elements in $\Delta_{1}(H)$.  Thus the dimension of the image of $\partial_1$ is $\binom{n}{2}$.  The dimension of the kernel of $\partial_0$ equals the cardinality of $\Delta_0(H)$.  Since $\Delta_{0}(H)$ consists of all ordered partitions $[B_1, B_2]$ where $\{1\} \in B_1$ and where one of the $B_i$ contains a hyperedge of $H$, there are $ n + \binom{n}{2}$ elements in $\Delta_0(H)$.  So, the dimension of $HC_{0}(\Delta(H)) = \binom{n}{1}$.  It is clear that the dimension of $HC_{-1}(\Delta(H)) = \binom{n}{0}$. \end{proof}



\textbf{Acknowledgments}

\hspace{0.5in} The author would like to thank the anonymous referees for their thoughtful feedback and suggestions.  In particular, the author would like to thank one of the anonymous referees for the suggestion of the shelling order in the proof of Theorem ~\ref{ShellableThm1}.  The author would also like to thank Jane H. Long for several helpful conversations on the importance of the condition $k < n/2$ in the proof of Theorem ~\ref{ShellableThm1}.  

\bibliography{biblio}

\begin{thebibliography}{99}

\bibitem{bj} A. Bj\"orner, ``Topological methods", in \emph{Handbook of Combinatorics}, R. Graham, M. Gr\"otschel, and L. Lov\'asz (Eds.), North-Holland/Elsevier, Amsterdam, 1995, 1819--1872.

\bibitem{bw} A. Bj\"orner and M. Wachs, ``Shellable nonpure complexes and posets I." \emph{Trans. Amer. Math. Soc.} \textbf{348} (1996), no. 4, 1299--1327.

\bibitem{bd} F. Breuer, A. Dall, and M. Kubitzke, ``Hypergraph coloring complexes." (Preprint)

\bibitem{ch} T. Chow.  ``You could have invented spectral sequences." \emph{Notices Amer. Math. Soc.} \textbf{53} (2006), no. 1, 15--19.

\bibitem{cr} S. Crown. ``The homology of the cyclic coloring complex of simple graphs." \emph{J. Combin. Theory Ser. A}, \textbf{116} (2009), no. 3, 595--612.

\bibitem{ha} P. Hanlon, ``A Hodge decomposition interpretation for
the coefficients of the chromatic polynomial", \emph{Proc. Amer. Math. Soc.}, \textbf{136} (2008), no. 11, 3741--3749.


\bibitem{hu} A. Hultman, ``Link complexes of subspace arrangements," \emph{European J. Combin.}, \textbf{28} (2007), no. 3, 781--790.

\bibitem{jo} J. Jonsson, ``The topology of the coloring complex", \emph{J. Algebraic Combin.}, \textbf{21} (2005), no. 3, 311--329.

\bibitem{l1}J.-L. Loday, ``Cyclic Homology." \emph{A Series of Comprehensive Studies in Mathematics}. vol. 301. Springer-Verlag, New York, 1992.

\bibitem{lr} J. Long and S.C. Rundell. ``The Hodge structure of the coloring complex of a hypergraph." \emph{Discrete Math.}, \textbf{311} (2011), no. 20, 2164--2173.

\bibitem{mk} J. Munkres. ``Elements of Algebraic Topology."  Perseus Books, Cambridge, Massachusetts, 1984.

\bibitem{sm} E. Steingr\'{\i}msson, ``The coloring ideal and coloring
complex of a graph," \emph{J. Algebraic Combin.} \textbf{14} (2001), no. 1,
73--84.





\end{thebibliography}

\end{document}